\tikzset{
  mirror scope/.is family,
  mirror scope/angle/.store in=\mirrorangle,
  mirror scope/center/.store in=\mirrorcenter,
  mirror setup/.code={\tikzset{mirror scope/.cd,#1}},
  mirror scope/.style={mirror setup={#1},spy scope={
      rectangle,lens={rotate=\mirrorangle,yscale=-1,rotate=-1*\mirrorangle},size=80cm}},
}
\setlist[enumerate]{leftmargin=.5in}
\setlist[itemize]{leftmargin=.5in}
\crefname{hypothesis}{Hypothesis}{Hypotheses}
\crefname{fact}{Fact}{Facts}
\title{Squared Linear Models
}
\author{Hannah Friedman\thanks{UC Berkeley, CA 94720-3840, United States 
  (\email{hannahfriedman@berkeley.edu}).}
\and Bernd Sturmfels\thanks{MPI MiS, 04103 Leipzig, Germany (\email{bernd@mis.mpg.de}).}
\and Maximilian Wiesmann\thanks{CSBD, Dresden, 01307 Germany 
  (\email{wiesmann@pks.mpg.de}).}}
\definecolor{cb-yellow}{RGB}{221,170,51}
\definecolor{cb-red} {RGB}{187,85,102}
\definecolor{cb-teal}{RGB}{0,153,136}
\definecolor{cb-blue} {RGB}{0,68,136}
\definecolor{cb-green}{RGB}{17,119,51}
\definecolor{cb-purple} {RGB}{170,68,153}
\definecolor{cb-palegrey} {RGB}{221,221,221}
\newcommand{\PP}{\mathbb{P}}
\newcommand{\R}{\mathbb{R}}
\newcommand{\QQ}{\mathbb{Q}}
\newcommand{\CC}{\mathbb{C}}
\newcommand{\ZZ}{\mathbb{Z}}
\newcommand{\NN}{\mathbb{N}}
\newcommand{\cA}{\mathcal{A}}
\newcommand\mirror[1][]{\spy[overlay,#1] on (\mirrorcenter) in node at (\mirrorcenter)}
\newcolumntype{P}[1]{>{\centering\arraybackslash}p{#1}}
\begin{document}

\maketitle

\begin{abstract}
We study statistical models that are parametrized by squares of linear forms. All critical points of the likelihood function are real and positive. There is one critical point in each region of the projective hyperplane arrangement defined by the linear forms. We examine the ideal and singular locus of the model, and we give a determinantal presentation for  its likelihood correspondence. We characterize tropical degenerations of the MLE, we describe 
the log-normal polytopes, and we explore connections to determinantal point processes.
\end{abstract}

\begin{keywords}
likelihood geometry, hyperplane arrangement, determinantal point process
\end{keywords}

\begin{MSCcodes}
13P25, 62R01, 14Q30
\end{MSCcodes}

 \section{Introduction}

We consider a discrete statistical model on $n$ states
which is given by linear forms
$\ell_1,\ldots,\ell_n$ in real variables $x_1,\ldots,x_d$,
where $n > d > 1$. In this model,
the probability of observing the $i$-th state is
 \begin{equation}
 \label{eq:para}
  p_i(x) \,\, = \,\, \frac{\ell_i^2(x)}{\sum_{j=1}^n \ell_j^2(x)} \qquad {\rm for} \,\, i = 1,2,\ldots,n. 
  \end{equation}
Here $x = (x_1,\ldots,x_d)$ are the model parameters. The denominator
is the partition function.
We write $X$ for the projective variety in $\PP^{n-1} = \PP^{n-1}_{\CC}$
parametrized by (\ref{eq:para}) and $I_X$ for its prime ideal in
$\CC[p_1,p_2,\ldots,p_n]$. Our aim is to explain the
likelihood geometry \cite{HS} of~$X$.

\begin{example}[$d=3,n=4$]
\label{ex:d=3n=4}
Consider the model $X$ defined by four lines in  $\PP^2$, namely
$$ \ell_1 = x_1, \,\,
\ell_2 = x_2, \,\,
\ell_3 = x_3 \,\,\,\, {\rm and} \,\,\,\,
\ell_4 = x_1+x_2+x_3. $$
Then $X$ is a {\em Steiner surface} in $\PP^3$,
by \cite[Example 3.5]{DGK}, with ideal $I_X$ generated by the quartic
\begin{equation}
\label{eq:quartic}
\begin{matrix}
 p_1^4+p_2^4+p_3^4+p_4^4
 \,\,+\,\,6\, (p_1^2 p_2^2+p_1^2 p_3^2+p_1^2 p_4^2+p_2^2 p_3^2+p_2^2 p_4^2 + p_3^2 p_4^2) \\
-\,4 (p_1^3 p_2+p_1^3 p_3+ \cdots+p_3 p_4^3)
\,+\,4 (p_1^2 p_2 p_3+p_1^2 p_2 p_4
+\cdots +p_2 p_3 p_4^2) \, -\,40 p_1 p_2 p_3 p_4.
\end{matrix}
\end{equation}
The surface $X_\R$ is singular along three lines. Its smooth part has
 $16 = 4 + 3 \cdot 4$ connected components in the tetrahedron $\Delta_3 = \PP^3_{> 0}$.
 These components are  formed by the $7 = 4 + 3$ regions of the
 line arrangement  $\mathcal{A} = \{\ell_1,\ell_2,\ell_3,\ell_4\}$
 in the plane $\PP^2_\R$.
The likelihood function
$$  \quad p_1(x)^{s_1} p_2(x)^{s_2} p_3(x)^{s_3} p_4(x)^{s_4},
\qquad \hbox{for given data $s_1,s_2,s_3,s_4 \in \NN$,} $$
is positive on $\PP_\R^2 \backslash \mathcal{A}$.
It has seven complex critical points,  all real, one in each region.
\end{example}

In general, 
the likelihood function of a 
squared linear model (\ref{eq:para})
can have arbitrarily many
local maxima inside the probability simplex. 
We next present an example to show this.

\begin{example}[The braid arrangement] \label{ex:braid}
  Fix $c = d+1$, $n = \binom{c}{2}$, and let $X$ be the model
  \begin{equation}
\label{eq:braidmodel}
 p_{ij}(x) \,\, = \,\,\frac{(x_i-x_j)^2}{c \, (\sum_{k=1}^c x_k^2)\, -\, (\sum_{k=1}^c x_k)^2}
\qquad {\rm for}\,\,\, 1 \leq i < j \leq c. 
\end{equation}
The ideal $I_X$ is generated by the $2 \times 2$ minors of the symmetric
$d \times d$ matrix with
diagonal entries $\,2 p_{ic}\,$ and
off-diagonal entries $\,p_{ic} + p_{jc} - p_{ij}\,$
for $1 \leq i,j \leq d$.
Thus $X$ is a {\em Veronese variety}. It has dimension $c-2$ and degree $2^{c-2}$
in $\PP^{\binom{c}{2}-1}$. The likelihood function has
 $c\,!/2$ complex critical points.
 All of them are real and positive. There is one such point in each region
of the braid arrangement in $\PP_\R^{c-1}$.
These regions are indexed by the $c\,!$ permutations of
$\{1,2,\ldots,c\}$, modulo reversal involution.
We view $X$ as a subvariety of the {\em squared Grassmannian} ${\rm sGr}(2,c+1)$,
which was studied in \cite{DFRS, Fri}.
Both models have the same ML degree.
\end{example}

We now discuss the structure of this paper, and we summarize our main results.
Given the model (\ref{eq:para}), we fix data $s_1,\ldots,s_n \in \R_{>0}$.
We are interested in the log-likelihood function 
\begin{align}\label{eq:loglikelihoodfn}
  x \,\,\mapsto \,\,  s_1 \log\,\ell^2_1(x) + \cdots + s_n \log \,\ell^2_n(x) - (s_1 + \cdots + s_n)\log\bigl(\ell_1^2(x) + \cdots + \ell_n^2(x)\bigr).
\end{align}
In Section~\ref{sec2} we prove that all
complex critical points of~(\ref{eq:loglikelihoodfn}) are real, and there is precisely one critical point in
each connected component of $\,\PP^{d-1}_\R \backslash \mathcal{A}$.
Hence the ML degree equals the number of such regions, which is
$\sum_{i=0}^{d-1} \binom{n-1}{i}$ when the hyperplane arrangement $\mathcal{A}$ is generic;
see, e.g.,~(\ref{eq:genfcn}). This result is Theorem \ref{thm:two}. It rests on recent work of  Reinke and Wang~\cite{RW}.

In Section \ref{sec3} we study the variety $X \subset \PP^{n-1}$,
focussing on generic arrangements $\mathcal{A}$.
For large $n$, the ideal $I_X$ is generated by linear forms and the
$2 \times 2$ minors of a symmetric $d \times d$ matrix (Proposition \ref{prop:veronese}).
For small $n$, the variety $X$ is a projection of the
Veronese variety $\nu_2(\PP^{d-1})$. 
The ideal $I_X$ is complicated, and tough computations are needed.
Proposition \ref{prop:ideal_computations} summarizes what we currently know.
The singular locus of $X$ is determined in Theorem \ref{thm:singularities}.

Section \ref{sec4} is devoted to the likelihood correspondence.
This is the variety in $\PP^{n-1}\times \PP^{d-1}$ which parametrizes
pairs $(s,x)$ where $x$ is a critical point of  the log-likelihood function 
(\ref{eq:loglikelihoodfn}) for the data $s$.
We study its bihomogeneous prime ideal using the methods in
 \cite[Section~2]{KSSW}. Theorem \ref{thm:likely}  expresses the likelihood ideal for
generic arrangements as a determinantal ideal.  The ideal
is minimally generated by
$\binom{n}{d-2}$ polynomials of bidegree $(1,n-d+2)$ in $\CC[s,x]$.

In Section \ref{sec5} we study likelihood degenerations \cite{ABFKST}
when $\mathcal{A}$ is generic.
Theorem  \ref{thm:happydegeneration} shows that the critical points
are distinct even when $s$ is a unit vector~$e_i$.
We then turn to tropical maximum likelihood estimation (MLE) for squared linear models.
Tropical MLE for linear models appeared in \cite[Section 7]{ABFKST},
and it was developed for toric models in \cite{BDH}. 
Corollary   \ref{cor:tropgeneric} is an analogue to \cite[Theorem 7.1]{ABFKST},
but now for all regions of the arrangement $\mathcal{A}$,
not just bounded regions.

In Section \ref{sec6} we investigate
log-normal polytopes and
log-Voronoi cells, as defined in \cite{AH}. For each distribution $p$ in the  model $X$, the 
former parametrizes data $s$ such that $p$ is critical for~(\ref{eq:loglikelihoodfn}),
while the latter parametrizes data $s$
such that $p$ is the MLE.
These convex bodies agree for linear models \cite{Ale}.
They disagree for squared linear models, even when both are polytopes (Example  \ref{ex:nongeneric-uniform2}).
The log-normal polytopes  are characterized in Theorem \ref{thm:chamber}.

In Section \ref{sec7}, we show how squared linear models arise in applications. 
Namely, we prove that when the hyperplane arrangement $\mathcal A$ is a discriminantal arrangement, then the corresponding model is a linear projection determinantal point process (Proposition~\ref{prop:discriminantal}).

\smallskip

Software and data for computational results in this paper are
 presented on a Zenodo page, available at
\url{https://doi.org/10.5281/zenodo.15997158}.

\section{Real and Positive}
\label{sec2}

We will show that all complex critical points $x$ of (\ref{eq:loglikelihoodfn})
are real. This implies that
$\ell^2_1(x), \ldots, \ell^2_n(x)$ are positive, and every critical point of the log-likelihood function yields a probability distribution
in $\Delta_{n-1}$.
A similar argument was used in \cite{Fri} to show that the log-likelihood function on the squared Grassmannian has only positive critical points. 
By \cite[Theorem 1.7]{HS},
the number of critical points of (\ref{eq:loglikelihoodfn}) is the signed Euler characteristic
of the very affine variety underlying our statistical model.
This variety is the complement in $\PP^{d-1}$
of the $n$ hyperplanes $V(\ell_1),\ldots,V(\ell_n)$ and the quadric
$V(q)$ defined by $q = \ell_1^2 + \cdots + \ell_n^2$.
Its Euler characteristic is an upper bound on the number of real critical points. 
One  argues that there is at least one critical point per region of 
$\PP^{d - 1}_\R \backslash \!\cup_{i=1}^n \! V(\ell_i)$.
This gives a lower bound on the number of real critical points. 
The punchline is that the two bounds agree.

\begin{theorem} \label{thm:two}
For generic data $s \in \R_{>0}^n$, all complex critical points of the log-likelihood function \eqref{eq:loglikelihoodfn}
are real, and there is one critical point in each region of $\,\PP_{\R}^{d-1} \backslash \mathcal A$. 
Hence every critical point on the implicit model $X \subset \PP^{n-1}$ is positive and is a local maximum.
\end{theorem}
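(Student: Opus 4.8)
The plan is to squeeze the number of real critical points between a matching upper and lower bound, as outlined in the paragraph preceding the statement. First I would set up the very affine variety $U = \PP^{d-1} \setminus (V(\ell_1) \cup \cdots \cup V(\ell_n) \cup V(q))$ with $q = \sum_i \ell_i^2$, and invoke \cite[Theorem 1.7]{HS}: for generic $s$, the number of complex critical points of \eqref{eq:loglikelihoodfn} equals the signed Euler characteristic $(-1)^{d-1}\chi(U)$, which is therefore an upper bound on the count of real critical points. The technical input here is that adding the quadric $V(q)$ to the hyperplane arrangement $\mathcal{A}$ does not change this Euler characteristic; this is precisely where the work of Reinke and Wang \cite{RW} enters — their results on the topology of arrangements of hyperplanes together with this particular sum-of-squares quadric let us identify $(-1)^{d-1}\chi(U)$ with the number of regions of $\PP^{d-1}_\R \setminus \mathcal{A}$, which is $\sum_{i=0}^{d-1}\binom{n-1}{i}$ in the generic case.

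Next I would establish the lower bound by a standard boundary/coercivity argument on each region. Fix a connected component $R$ of $\PP^{d-1}_\R \setminus \mathcal{A}$; since $s \in \R^n_{>0}$, the function \eqref{eq:loglikelihoodfn} is real-analytic and well-defined on $R$ (the denominator $q$ is a strictly positive sum of squares away from the common zero locus of all $\ell_i$, which is empty in $\PP^{d-1}$ when the arrangement is essential). As $x$ approaches the boundary of $R$, some $\ell_i(x) \to 0$, so $s_i \log \ell_i^2(x) \to -\infty$ while the remaining terms stay bounded above (the subtracted partition-function term is bounded on the compact $\PP^{d-1}_\R$); hence the log-likelihood tends to $-\infty$ on $\partial R$. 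Therefore it attains a maximum in the interior of $R$, producing at least one real critical point per region, and hence at least $\sum_{i=0}^{d-1}\binom{n-1}{i}$ real critical points in total (each region contributes its own, in disjoint open sets). Combining with the upper bound forces equality: every complex critical point is real, and there is exactly one per region, automatically a local maximum.

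Finally, the positivity and "local maximum on $X$" conclusions are immediate corollaries: if $x$ is a (real) critical point then it lies in some region $R$, so no $\ell_i(x)$ vanishes, hence $p_i(x) = \ell_i^2(x)/q(x) > 0$ and the point lies in the open simplex $\Delta_{n-1}$; and since the unique critical point in $R$ is the global maximizer of the log-likelihood restricted to $R$, its image in $X$ is a local maximum of the likelihood on the model. The main obstacle is the Euler-characteristic bookkeeping in the upper bound — specifically, verifying via \cite{RW} that the quadric hypersurface $V(q)$ contributes nothing to the signed Euler characteristic of the hyperplane-arrangement complement, so that the count collapses exactly to the number of regions of $\mathcal{A}$; everything else is routine real-analytic coercivity and a pigeonhole comparison of the two bounds.
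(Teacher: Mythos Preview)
Your sandwich strategy matches the sketch the paper gives just before the theorem, but the actual proof and your proposal both hinge on a step you do not supply. Reinke--Wang's result \cite[Theorem~1.2]{RW} applies to a quadric that is in \emph{general position} relative to $\mathcal{A}$, meaning no flat $\bigcap_{i \in I} V(\ell_i)$ is tangent to the quadric. The partition function $q = \sum_i \ell_i^2$ is far from generic---its coefficients are determined by the $\ell_i$---so this hypothesis requires a genuine argument. The paper establishes it as Lemma~\ref{lem:generic_quadric} via an induction on $|I|$ exploiting that $A^TA$ is real and invertible; your proposal simply asserts that \cite{RW} handles ``this particular sum-of-squares quadric,'' which begs exactly that question. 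Once Lemma~\ref{lem:generic_quadric} is in hand, the paper applies \cite[Theorem~1.2]{RW} directly in an affine chart $\{\ell_1 = 1\}$: that theorem already delivers ``all complex critical points are real, with exactly one per region,'' so no separate Euler-characteristic bookkeeping or coercivity lower bound is needed for the count. The coercivity observation survives only in the last line, to justify the local-maximum claim.

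A secondary issue: your assertion that ``adding the quadric $V(q)$ \dots does not change this Euler characteristic'' is false. For $d=2$, the complement of $n$ points in $\PP^1_\CC$ has $\chi = 2 - n$; further removing the two complex-conjugate points of $V(q)$ gives $\chi = -n$, whose sign-corrected value $n$ is the region count. What is true---and perhaps what you meant---is that $V(q)$ has no real points, so the \emph{real} region count is unchanged; but the complex Euler characteristic certainly moves, and that movement is precisely what pushes the upper bound from the bounded-region count of the bare hyperplane complement up to the number of all projective regions.
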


As a consequence, any local optimization method, e.g. gradient ascent, used to find the MLE is likely to fail, since there is an abundance of spurious local maxima. 
Instead, computing the MLE via homotopy continuation might be more appropriate.\par 

To prove Theorem~\ref{thm:two}, we apply \cite[Theorem 1.2]{RW}, which states that, if the partition function $q =\ell_1^2 + \cdots + \ell_n^2$ is replaced with a generic quadric $g$, then all critical points of \eqref{eq:loglikelihoodfn} are~real. 
By generic we mean that the  hypersurface $V(g)$ is in general position relative to the hyperplane arrangement $ \mathcal A = \{V(\ell_1), \ldots, V(\ell_n)\}$, i.e., no nonempty intersection $\bigcap_{i \in I} V(\ell_i)$ for $I \subseteq [n]$ is tangent to $V(g)$.
We will 
prove that the partition function $q$ satisfies this.

\begin{lemma}\label{lem:generic_quadric}
    The quadratic hypersurface defined by
     $q = \ell_1^2 + \cdots + \ell_n^2$ in $\PP^{d-1}$ is in general position with respect to the hyperplane arrangement 
     $\mathcal A$ that underlies the squared linear model.
     \end{lemma}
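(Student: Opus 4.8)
The plan is to verify the general-position condition directly from the structure of $q = \sum_{i=1}^n \ell_i^2$. Fix a subset $I \subseteq [n]$ with $L_I := \bigcap_{i \in I} V(\ell_i) \neq \emptyset$, and let $p \in L_I \cap V(q)$ be a point in the intersection; I must show that $V(q)$ and $L_I$ are not tangent at $p$, i.e., that $T_p V(q) \not\supseteq T_p L_I$, equivalently $\nabla q(p)$ does not vanish on the linear subspace $L_I$. Choosing affine coordinates, $L_I$ is the common zero set of the forms $\ell_i$ for $i \in I$, so $T_p L_I$ is the subspace cut out by $\mathrm{d}\ell_i|_p = \ell_i$ (the forms are linear, so their differentials are themselves). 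The gradient of $q$ is $\nabla q(x) = 2\sum_{j=1}^n \ell_j(x)\,\nabla\ell_j = 2\sum_{j=1}^n \ell_j(x)\,\ell_j$, viewing each linear form as a covector. At the point $p \in L_I$ we have $\ell_i(p) = 0$ for $i \in I$, so the sum reduces to $\nabla q(p) = 2\sum_{j \notin I} \ell_j(p)\,\ell_j$. Tangency at $p$ would mean this covector lies in the span of $\{\ell_i : i \in I\}$, i.e., $\nabla q(p) \in \langle \ell_i : i\in I\rangle$.

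The key observation is that $p \in V(q)$ forces $\sum_{j=1}^n \ell_j(p)^2 = 0$. Over $\CC$ this does not immediately give $\ell_j(p)=0$ for all $j$, so I cannot simply conclude $\nabla q(p)=0$; instead I pair $\nabla q(p)$ against the vector $p$ itself (regarded in the ambient space, away from the hyperplane at infinity). Since each $\ell_j$ is linear, $\ell_j(p)$ is exactly the value of the covector $\ell_j$ on $p$, so $\langle \nabla q(p), p\rangle = 2\sum_{j\notin I}\ell_j(p)^2 = 2\sum_{j=1}^n \ell_j(p)^2 = 2\,q(p) = 0$ — but this is the Euler-relation identity and is vacuous. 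The right pairing is instead: suppose for contradiction $\nabla q(p) = \sum_{i\in I} \lambda_i \ell_i$ for scalars $\lambda_i$. Evaluate both sides on any vector $v \in L_I$ (so $\ell_i(v)=0$ for $i\in I$): the right side vanishes, hence $2\sum_{j\notin I}\ell_j(p)\,\ell_j(v) = 0$ for all $v \in L_I$. Now choose $v = p$; then $\sum_{j\notin I}\ell_j(p)^2 = 0$. Combined with the reality and squared structure, and here using that the $\ell_j$ are \emph{real} linear forms: if instead we had started with a \emph{real} critical-point analysis this would force $\ell_j(p)=0$, but since we work over $\CC$ we must argue more carefully. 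The clean route is to use that $\mathcal A$ is the arrangement underlying the \emph{squared linear model}, hence (after the generic perturbation implicit in Theorem~\ref{thm:two}, or by the standing genericity of $\mathcal A$) the forms $\{\ell_j : j \notin I\}$ restricted to $L_I$ span the dual of $L_I$, so from $\sum_{j\notin I}\ell_j(p)\,\ell_j(v)=0$ for all $v\in L_I$ we get that the vector $(\ell_j(p))_{j\notin I}$ is orthogonal to a spanning set of image vectors — and a short rank argument, using $n > d$ and genericity, forces either $p \notin L_I$ or $p$ lies on the hyperplane at infinity, both contradictions.

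The main obstacle is precisely this last rank/genericity step: establishing that $\nabla q(p)\notin \langle \ell_i : i\in I\rangle$ cannot be done by a purely formal identity and genuinely uses that $\mathcal A$ is in general position. Concretely, the argument I would write is: project to $L_I \cong \PP^{d-1-|I|}$, where $q$ restricts to $\bar q = \sum_{j\notin I} \bar\ell_j^2$, a sum of at least $n - |I| > d - |I| > \dim L_I + 1$ squares of linear forms in general position on $L_I$; tangency of $V(q)$ and $L_I$ at $p$ is equivalent to $p$ being a \emph{singular} point of the hypersurface $V(\bar q) \subset L_I$. So the statement reduces to: a generic sum of $m \geq k+1$ squares of linear forms on $\PP^{k-1}$ is a smooth quadric. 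But $\nabla \bar q = 2\sum \bar\ell_j(x)\bar\ell_j$ vanishes at $p$ iff the covectors $\bar\ell_j$, weighted by $\bar\ell_j(p)$, sum to zero; since $m \geq k+1$ generic linear forms on a $k$-dimensional space are in \emph{general linear position} (every $k$ of them are independent), the only linear dependence among all $m$ of them that could be witnessed is trivial unless the weights themselves encode a genuine dependence, and a dimension count shows the locus of such $p$ on $V(\bar q)$ is empty for generic forms. This reduction to smoothness of a generic sum-of-squares quadric is the heart of the proof; everything else is bookkeeping with the Euler relation and restriction to flats $L_I$.
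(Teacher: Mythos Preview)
Your reduction is sound: tangency of $V(q)$ and $L_I$ at $p$ is exactly the condition that $p$ is a singular point of the restricted quadric $\bar q = \sum_{j\notin I}\bar\ell_j^{\,2}$ on $L_I$. This is a clean way to organize the problem and, if completed correctly, would give a one-step proof that avoids the paper's induction on $|I|$.

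The gap is in how you finish. You repeatedly invoke ``genericity of $\mathcal A$'' and close with a vague dimension count for ``generic forms''. But the lemma carries no genericity hypothesis: it is stated for an arbitrary real arrangement $\mathcal A$ (the paper only assumes it is essential, which can always be arranged). So appealing to general position of the $\ell_j$ is not permitted, and the dimension-count sketch is not a proof even under that extra hypothesis. The actual mechanism is the one you mention and then abandon: \emph{reality of the coefficients}. On $L_I$ the matrix of $\bar q$ is the Gram matrix $\bar A^{T}\bar A$, where the rows of $\bar A$ are the real restricted covectors $\bar\ell_j$. Since the arrangement is essential, these restrictions span the (real) dual of $L_I$, so $\bar A$ has full column rank over $\R$ and $\bar A^{T}\bar A$ is positive definite, hence invertible. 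Thus $V(\bar q)$ is a smooth quadric and no tangency can occur. The paper's proof uses reality in the same spirit but packages it differently: in the base case $|I|=1$ it observes that a putative tangency point would be forced to be real (via invertibility of $A^{T}A$), whence $q$ evaluated there is a sum of squares of real numbers and cannot vanish; the general case is then handled by projecting onto $V(\ell_1)$ and inducting. Your restriction-to-$L_I$ approach is arguably more direct, but only once you replace the genericity appeal with the positive-definiteness argument above.
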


\begin{proof}
  We prove that, for any nonempty subset $I \subseteq [n]$, the intersection $ \bigcap_{i \in I} V(\ell_i)$
    is  not tangent  to the hypersurface $V(q)$.
  Let $A $ be the $n \times d$ matrix satisfying $Ax = (\ell_i(x))_{i \in [n]}$.
  We assume that $A$ has rank $d$, so the  arrangement $\mathcal A$ is essential. 
  Otherwise, $d - {\rm rank}(A)$ of the variables can be eliminated, resulting in an essential arrangement in $\PP^{\,{\rm rank}(A) - 1}$. 
    If $A_i$ denotes the $i$th row of $A$, then the gradient of the partition function is 
  \begin{align*}
    \nabla (q(x)) \,\,= \,\,2\ell_1(x) A_1 + \cdots + 2\ell_n(x) A_n     \,\, = \,\,2x^T A^T A.
  \end{align*}
  We proceed by induction on  the cardinality $|I|$.
    We first claim that $V(\ell_i)$ is not tangent to $V(q)$ for $i = 1, \ldots, n$.
  This is equivalent to proving that the $2 \times d$ matrix
    \begin{align*}
    C \,\,= \,\,\begin{pmatrix}
      x^T A^T A\\
      A_i
    \end{pmatrix}
  \end{align*}
    has rank $2$ at every point $x \in V(q) \cap V(\ell_i)$. We now show the contrapositive.
    
  Suppose there exists  $\alpha \in \CC^*$ such that $\alpha x^T A^T A = A_i$.
  Since $A_i$ is real and nonzero and $A^TA$ is a real invertible matrix, $\alpha x$ is also real and nonzero.
  Since $q$ is a sum of squares and $\alpha \neq 0$, we have $q(\alpha x) = \alpha^2 q(x) \neq 0$.
  Thus the matrix $C$ has rank 2 for any $x \in V(q) \cap V(\ell_i)$.

We now consider an index set $I$ of cardinality $r$. After relabeling, 
$I = \{1,2,\ldots,r\}$, and we assume that  $\bigcap_{i=1}^r V(\ell_i)$  is non-empty.
We must show that this intersection is not  tangent to $V(q)$.
If it were tangent, there would exist $\lambda_1, \ldots, \lambda_r$ such that $\lambda_1 A_1 + \cdots + \lambda_r A_r =  x^TA^TA$.
  Letting ${\rm proj}_{\ell_i}(v)$ denote the orthogonal projection of $v$ onto the hyperplane $V(\ell_i)$, this implies
  \begin{align}\label{eq:tangentprojection}
  0 + \lambda_2 {\rm proj}_{\ell_1}A_2 + \cdots + \lambda_r {\rm proj}_{\ell_1}A_r 
  \,\,=\,\,  {\rm proj}_{\ell_1}(x^TA^TA).
  \end{align}
   We now view the intersections $V(\ell_2) \cap V(\ell_1), \ldots, \,V(\ell_n) \cap V(\ell_1), \,V(q) \cap V(\ell_1)$ as subvarieties of $V(\ell_1) \cong \PP^{d - 2}$.
  In that ambient space, the normal vector of $V(q) \cap V(\ell_1)$ is ${\rm proj}_{\ell_1}(x^TA^TA)$ at $x$.
  Similarly,  the normal vector of $V(\ell_i) \cap V(\ell_1)$ is ${\rm proj}_{\ell_1}(A_i)$.
  The condition  \eqref{eq:tangentprojection} means that the intersection 
  $\bigcap_{i=2}^r(V(\ell_i) \cap V(\ell_1))$  is tangent to 
  $V(q) \cap V(\ell_1) = V(\ell_2^2 + \cdots + \ell_n^2) \cap V(\ell_1)$
  inside $ \PP^{d - 2}$.
  This is a contradiction to our induction hypothesis. We conclude that the $\ell_i$ are in general position relative to the quadric $q$ in the complex projective space $\PP^{d-1}$.
\end{proof}

\begin{remark}
    Lemma~\ref{lem:generic_quadric} can fail if the coefficients of the $\ell_1, \ldots, \ell_n$  are complex.
    For example, if $d = 2, n = 3$ and $\ell_1=  \sqrt{-1} \cdot x,\,\, \ell_2 = y,\,\, \ell_3 = x+ y$, then the quadric factors as $q = \ell_1^2 + \ell_2^2 + \ell_3^2 = 2y(x + y)$. The intersection $V(q) \cap V(\ell_2) = V(\ell_2)$ is not transverse.
    \end{remark}

We will now prove the main result in this section.

\begin{proof}[Proof of  Theorem \ref{thm:two}]
   By Lemma~\ref{lem:generic_quadric},  the quadric $V(\ell_1^2 + \cdots + \ell_n^2)$ is in general position 
   with respect to the arrangement $\mathcal A$. As in the previous proof, we assume that $\mathcal{A}$ is essential, 
   since we can always reduce the number of variables to obtain an essential arrangement.
     
    We work in an affine chart where $\ell_1(x)= 1$; the quadric is still general with respect to $V(\ell_2), \ldots, V(\ell_n)$ and the arrangement $\mathcal A' = \{V(\ell_2), \ldots, V(\ell_n)\}$ is essential. Indeed, since $n > d$, we may assume that the normal of $V(\ell_1)$ is in the span of the normals of $V(\ell_2), \ldots, V(\ell_n)$.

On our affine chart $\{\ell_1(x) = 1\} \cong \CC^{d-1}$,
we consider the affine log-likelihood function
    \begin{align}\label{eq:likelihood_chart}
     2s_2 \log\, \ell_2(x) + \cdots + 2s_n \log\, \ell_n(x) \,-\, (s_1 + \cdots + s_n) \log(1 + \ell_2(x)^2 + \cdots + \ell_{n}(x)^2).
    \end{align}
    We do not lose any critical points when moving to the affine chart, since all critical points
    of (\ref{eq:loglikelihoodfn}) have $\ell_1(x) \neq 0$.
    By \cite[Theorem 1.2]{RW}, the affine log-likelihood function \eqref{eq:likelihood_chart} has only real critical points, and there is  exactly one critical point per region of 
    the affine hyperplane arrangement $\R^{d-1} \backslash \mathcal A'$.
    Hence the log-likelihood function \eqref{eq:loglikelihoodfn} has only real critical points.
    It has exactly one critical point per region of the 
    projective hyperplane arrangement $\PP_{\R}^{d-1} \backslash \mathcal A$. 

The implicit model $X$ is a subvariety of $\PP^{n-1}$.
    For a critical point $p^*$ in $X$, there is a critical point $x^* \in\PP^{d-1}$ of 
     \eqref{eq:loglikelihoodfn}  such that $p^*_i = \ell^2_i(x^*)$ for all $i$.
              Since $x^*$ is real, all complex critical points $p^*$
    on $X$ are real with positive coordinates.
 The likelihood function $\prod_{i=1}^n \ell_i(x)^{2s_i}$ is positive on $\PP_{\R}^{d-1} \backslash \mathcal A$ and zero on $\mathcal A$, so every critical point is a local maximum. 
 \end{proof}

The number of regions of a hyperplane arrangement
$\mathcal{A}$ in real projective space $\PP_\R^{d-1}$
is computed by means of  the {\em characteristic polynomial}
$\chi_{\mathcal{A}}(t)$ of the matroid represented by $\mathcal{A}$.
See \cite[Section 2]{RW} for definitions
and \cite{BEK} for state of the art on computations.
Zaslavsky's Theorem states that
the number of regions of the central arrangement in $\R^d$ equals $| \chi_\mathcal{A}(-1)|$.

\begin{corollary}\label{cor:MLdegree}
The ML degree of the squared linear model (\ref{eq:para}) is equal to
$| \chi_\mathcal{A}(-1)|/2$.
\end{corollary}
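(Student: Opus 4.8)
The plan is to chain together Theorem~\ref{thm:two}, the standard passage from a projective arrangement to its affine cone, and Zaslavsky's Theorem, which is recalled in the paragraph preceding the corollary. By Theorem~\ref{thm:two}, the ML degree of the model \eqref{eq:para} equals the number of connected components of $\PP^{d-1}_\R \setminus \mathcal{A}$, so it suffices to count those regions in terms of $\chi_\mathcal{A}$.

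First I would view $\mathcal{A}$ as the projectivization of the central arrangement $\widehat{\mathcal{A}}$ in $\R^d$ cut out by the hyperplanes $V(\ell_1),\ldots,V(\ell_n)$ through the origin; this $\widehat{\mathcal{A}}$ represents the same matroid as $\mathcal{A}$, hence has the same characteristic polynomial $\chi_\mathcal{A}(t)$. Each region of $\widehat{\mathcal{A}}$ is an intersection of open half-spaces, thus an open convex cone, and such a cone cannot coincide with its image under $x \mapsto -x$. Therefore the antipodal involution acts freely on the regions of $\R^d \setminus \widehat{\mathcal{A}}$, and the quotient map $\R^d \setminus \{0\} \to \PP^{d-1}_\R$ identifies the antipodal pairs of regions of $\widehat{\mathcal{A}}$ bijectively with the regions of $\PP^{d-1}_\R \setminus \mathcal{A}$. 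Hence the number of projective regions is exactly half the number of regions of $\widehat{\mathcal{A}}$.

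Now Zaslavsky's Theorem states that $\widehat{\mathcal{A}}$ has $|\chi_\mathcal{A}(-1)|$ regions in $\R^d$. Combining the last two observations, $\PP^{d-1}_\R \setminus \mathcal{A}$ has $|\chi_\mathcal{A}(-1)|/2$ regions, which by Theorem~\ref{thm:two} equals the ML degree. The only point requiring a word of care is that Theorem~\ref{thm:two} may first replace $\mathcal{A}$ by its essentialization; but this changes neither the underlying matroid (hence not $\chi_\mathcal{A}$) nor the number of regions, since the regions of a non-essential central arrangement are cylinders over those of its essentialization. I do not anticipate a genuine obstacle: once Theorem~\ref{thm:two} and Zaslavsky's formula are in hand, the corollary is a one-line deduction via the cone–antipode dictionary.
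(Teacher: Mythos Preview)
Your proposal is correct and follows exactly the route the paper intends: the corollary is stated without a separate proof, as an immediate consequence of Theorem~\ref{thm:two} together with Zaslavsky's Theorem recalled in the preceding paragraph. Your write-up simply makes explicit the halving step from the central arrangement in $\R^d$ to its projectivization, which the paper leaves implicit.
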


\begin{example}
The  braid arrangement in Example \ref{ex:braid} has the characteristic polynomial 
$$\chi_\mathcal{A}(t) \,\,=\,\, (t-1)(t-2)\,\cdots \,(t-(c-1)).$$
Hence the ML degree of the squared linear model (\ref{eq:braidmodel}) is equal to
$\,|\chi_\mathcal{A}(-1)|/2 \,=\, c\,!/2$.
\end{example}

\begin{remark}
The variety $X$ has degree $2^{d-1}$  in $\PP^{n-1}$ provided
$\mathcal{A}$ contains $d+1$ hyperplanes
in general position. In this case,
the map $\PP^{d-1} \rightarrow X $  is birational,
and the ML degree of the parametric model agrees with that
of the implicit model.   To prove this, consider a generic
fiber $\{(\pm x_1: \pm x_2 : \cdots : \pm x_d )\}$ of the coordinatewise
squaring map $\PP^{d-1} \rightarrow \PP^{d-1}$. These $2^{d-1}$ points have distinct images in $\PP^d$
if we tag on the extra coordinate $ (\pm x_1\, +\, \pm x_2 \,+\, \cdots \,+\, \pm x_n)^2$.

A birational inverse from $X$ to $\PP^{d-1}$ can be found using Gr\"obner bases, but
its coordinates are generally complicated.
For instance, here is such an inversion formula for Example~\ref{ex:d=3n=4}:
$$ \frac{x_1}{x_3} \,\, = \,\,\, \frac{
p_1^2+p_2^2+p_3^2+p_4^2
-2 p_1 p_2+6 p_1 p_3-2 p_1 p_4-2 p_2 p_3-2 p_2 p_4-2 p_3 p_4}{
4 p_3 (-p_1+p_2-p_3+p_4)}. $$

If the matroid of $\mathcal{A}$ is not  connected, then
the map $\PP^{d-1} \rightarrow X$ is not birational, and
the degree of $X$ in $\PP^{n-1}$ is strictly smaller than $2^{d-1}$. For instance, if we
replace Example \ref{ex:d=3n=4} by
$$ \ell_1 = x_1, \,\,
\ell_2 = x_2, \,\,
\ell_3 = x_3 \,\,\,\, {\rm and} \,\,\,\,
\ell_4 = x_1+x_2, $$
then $X$ is a quadratic cone in $\PP^3$, and its parametrization
$\PP^{2} \rightarrow X $  is two-to-one.
\end{remark}

Let $X_{>0}$ denote the intersection of the variety $X$ with the
open probability simplex $\Delta_{n-1} = \PP^{n-1}_{> 0}$.
In statistics, $X_{>0}$ is the natural domain of the log-likelihood function.

\begin{proposition} \label{prop:components}
If the variety $X$ is smooth and its parametrization
$\PP^{d-1} \rightarrow X $  is birational, then
its positive part $X_{>0}$ is a manifold with precisely $|\chi_\mathcal{A}(-1)|/2$ connected 
components. This holds for generic arrangements $\mathcal{A}$ when
$n \geq 2d-1$, but it does not hold for $n=d+1 \geq 4$.
\end{proposition}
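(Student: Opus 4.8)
The plan is to factor the parametrization $\phi\colon\PP^{d-1}\to X$ of \eqref{eq:para} as $\phi=\mathrm{sq}\circ\iota$, where $\iota\colon\PP^{d-1}\hookrightarrow\PP^{n-1}$, $x\mapsto(\ell_1(x):\cdots:\ell_n(x))$, is the closed linear immersion with image a $(d-1)$-plane $L$ (assuming, as before, that $\mathcal A$ is essential), and $\mathrm{sq}\colon\PP^{n-1}\to\PP^{n-1}$, $[y]\mapsto[y_1^2:\cdots:y_n^2]$, is coordinatewise squaring. The map $\mathrm{sq}$ is finite, being the quotient of $\PP^{n-1}$ by the sign group $(\ZZ/2)^n/\{\pm\mathrm{id}\}$, so $\phi$ is a base-point-free finite morphism and $X=\mathrm{sq}(L)$ set-theoretically, the image of a complete variety being closed. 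For the general implication, suppose $X$ is smooth and $\phi$ is birational. Since a finite birational morphism onto a normal variety is an isomorphism, $\phi$ is an isomorphism $\PP^{d-1}\to X$ of real varieties, hence a homeomorphism $\PP^{d-1}_\R\to X_\R$ on real points. If $p\in X_{>0}$, its unique preimage $x=\phi^{-1}(p)$ is real, and $p_i=\ell_i^2(x)>0$ forces $\ell_i(x)\neq 0$ for all $i$; conversely $\phi$ sends $\PP^{d-1}_\R\setminus\mathcal A$ into $X_{>0}$. Thus $\phi^{-1}(X_{>0})=\PP^{d-1}_\R\setminus\mathcal A$, and $\phi$ restricts to a homeomorphism $\PP^{d-1}_\R\setminus\mathcal A\to X_{>0}$. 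Hence $X_{>0}$ is a $(d-1)$-manifold, and its number of connected components equals the number of regions of $\PP^{d-1}_\R\setminus\mathcal A$, namely $|\chi_{\mathcal A}(-1)|/2$ by Zaslavsky's theorem (cf.\ Theorem~\ref{thm:two} and Corollary~\ref{cor:MLdegree}).

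For generic $\mathcal A$ with $n\ge 2d-1$ both hypotheses hold. Birationality of $\phi$ follows from the preceding remark: since $n\ge 2d-1\ge d+1$, a generic arrangement contains $d+1$ hyperplanes in general position. Smoothness of $X$ for $n$ in this range is Theorem~\ref{thm:singularities}; equivalently, for such $n$ the ideal $I_X$ has the symmetric-minors presentation of Proposition~\ref{prop:veronese}, exhibiting $X\cong\nu_2(\PP^{d-1})$, which is smooth. The first paragraph then gives the claim.

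For $n=d+1\ge 4$, so $d\ge 3$, the conclusion fails. Choose coordinates with $\ell_i=x_i$ for $i\le d$ and $\ell_{d+1}=\sum_j c_jx_j$, all $c_j\neq 0$, and pick a sign vector $\sigma\in\{\pm 1\}^d$ whose set $S$ of negative entries satisfies $2\le|S|\le d-1$. Then $\phi(x)=\phi(\mathrm{diag}(\sigma)\cdot x)$ for all $x$ on the hyperplane $H_S=V(\sum_{j\in S}c_jx_j)$, which for generic $\mathcal A$ is not a member of $\mathcal A$, while $\mathrm{diag}(\sigma)\cdot x\neq x$ for $x$ generic on $H_S$. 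Hence $\phi$ is not injective on $\PP^{d-1}_\R\setminus\mathcal A$: two distinct smooth branches of $X$ pass through the common image point, so $X$ is singular there and $X_{>0}$ is not a manifold. The singular locus is described in general by Theorem~\ref{thm:singularities}; its first instance is the Steiner surface of Example~\ref{ex:d=3n=4}, singular along three lines through the open tetrahedron, over which $\PP^2_\R\setminus\mathcal A$ maps two-to-one (indeed the smooth part of $X_{>0}$ already splits into $16\neq 7$ pieces).

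The main obstacle is the deceptively simple homeomorphism step in the first paragraph: one must make sure the complex isomorphism $\phi$ carries the positive parts correctly, and in particular that every $p\in X_{>0}$ is the image of a \emph{real} parameter lying outside $\mathcal A$ — this is precisely what forces the component count of $X_{>0}$ to coincide with the number of regions of $\mathcal A$ rather than being a proper quotient of it. A secondary point, needed in the $n=d+1$ case, is to confirm that the two branches meeting over $H_S$ have distinct tangent spaces at a generic point, so that $X_{>0}$ genuinely fails to be a manifold rather than merely lying over a singular point of the complex variety $X$.
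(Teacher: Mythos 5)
Your proposal is correct, and its overall architecture is the same as the paper's: identify $X_{>0}$ with $\PP^{d-1}_\R\setminus\mathcal{A}$ via the parametrization and count regions by Zaslavsky's theorem; invoke Theorem~\ref{thm:singularities} for smoothness when $n\geq 2d-1$ and the existence of $d+1$ hyperplanes in general position for birationality; and exhibit gluing of regions along a codimension-one locus when $n=d+1$. Where you differ is in how the key homeomorphism is justified. The paper forward-references the singularity analysis (smoothness forces the parametrization to be injective off $\mathcal{A}$) and inverts by choosing the correctly signed square roots of the $p_i$; you instead factor the map through the finite squaring morphism and use that a finite birational morphism onto a normal variety is an isomorphism, then descend to real points. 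That is a cleaner, self-contained justification, and it automatically yields the point you rightly single out, namely that every $p\in X_{>0}$ has a \emph{real} preimage off $\mathcal{A}$. For $n=d+1$ the paper simply cites the Steiner surface of Example~\ref{ex:d=3n=4} and notes that pairs of regions are attached along the codimension-one singular locus; your hyperplanes $H_S$ with $2\leq|S|\leq d-1$ are exactly the loci $\ker(BA_{I,J})$ appearing in the proof of Theorem~\ref{thm:singularities}, so this part is a spelled-out version of the same idea.

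Two minor caveats. First, the aside that for $n\geq 2d-1$ the ideal ``equivalently'' has the presentation of Proposition~\ref{prop:veronese} is inaccurate in the range $2d-1\leq n<\binom{d+1}{2}$, where $X$ is a projection of the Veronese and its ideal is the complicated one recorded in Table~\ref{tab:deg_mingens}; this is harmless, since Theorem~\ref{thm:singularities} is the citation that actually carries the smoothness claim. Second, the transversality of the two real branches over $H_S$, which you flag as unverified, is indeed what one needs to conclude literally that $X_{>0}$ fails to be a manifold; but the failure asserted in the proposition already follows from $X$ being singular (Theorem~\ref{thm:singularities}, since $d+1\leq 2d-2$) together with the attachment of distinct regions along that locus, which is all the paper itself establishes via the Steiner example. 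So this self-identified gap is at worst at the same level of informality as the published argument.
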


\begin{proof}
Here we make a forward reference to the results on singularities in
Theorem \ref{thm:singularities}. Since $X$ is smooth, the 
parametrization is one-to-one outside the $n$ hyperplanes
$V(\ell_i)$ of the arrangement $\mathcal{A}$. Hence the parametrization
restricts to a homeomorphism between $\PP^{d-1}_\R \backslash \mathcal{A}$
and $X_{>0}$. The inverse is given by identifying the
correctly signed square root for each $p_i(x) = \ell_i(x)^2$.
The result hence follows from Zaslavsky's Theorem.
The failure for $n=d+1$ was seen for the Steiner surface in
Example \ref{ex:d=3n=4}. Here the singular locus has codimension one,
and the map attaches pairs of different regions of $\PP^{d-1}_\R \backslash \mathcal{A}$ 
along the singular locus.
\end{proof}

\section{Implicit Model Description}
\label{sec3}

In this section, we study the implicit representation of 
a squared linear model  in $\PP^{n-1}$. Our aim is to
describe its homogeneous prime ideal. We first focus on the generic case.
  Let $X_{d,n}$ denote the model for $n$ generic linear forms in $d$ variables,
  and write $I_{d,n}$ for its prime ideal. Thus $I_{d,n}$ is the ideal of
  algebraic relations among the squares of $n$ generic linear forms in $x_1,\ldots,x_d$.
  That ideal is easy to describe in the regime when the number $n$ is large enough relative to $d$.

  Fix $N = \binom{d+1}{2}$.
  Let $L$ be the $n \times N$ matrix whose $i$th row consists of the coefficients of $\ell_i^2(x)$.
  The columns of $L$ are indexed by quadratic monomials in $x_1,\ldots,x_d$. 
  Let $[L\,\, p]$ denote the $n \times (N + 1)$ matrix obtained from $L$ by appending a column of variables $p = (p_1\,\cdots\,p_n)^T$. 
  To construct the quadratic generators of $I_{d,n}$, we define $L'$ to be
  the $N \times N$ matrix formed by the first $N$ rows of $L$.
  The matrix $L'$ is invertible because the linear forms $\ell_1,\ldots,\ell_N$
  are generic, so their squares span the space of all quadratic forms.
  We  define the column vector
  \begin{equation}
    \label{eq:symmatrix}
    r \,\,=\,\, L'^{-1} \cdot (p_1\,\,p_2\,\,\cdots\,\,p_N)^T . 
  \end{equation}
  The entries of $r$ are indexed by the
  quadratic monomials in $x_1,\ldots,x_d$.
  These are naturally identified with the entries
  of a symmetric $d \times d$ matrix $\mathcal{R} = (r_{ij})$.
  Hence $\mathcal{R}$ is a  $d \times d$ matrix whose entries
  are linear forms in the first $N= \binom{d+1}{2}$ coordinates $p_1,\ldots,p_N$ of $\,\PP^{n-1}$.
  Given any matrix $M$ and $t \in \NN$, we write $I_t(M)$ for the ideal generated by the $t \times t$ minors of $M$. 
  
\begin{proposition} \label{prop:veronese} Let $n \geq N$.
  The prime ideal of $X_{d,n}$ equals
$\,    I_{d,n} = I_{N+1}([L \,\, p]) + I_2(\mathcal R)$.
This ideal is minimally generated by
  $n-\binom{d}{2}$  linear forms and
$  \frac{1}{12}(d+1) d^2 (d-1)$
  quadrics.
  The variety $X_{d,n}$
  is isomorphic to the quadratic
  Veronese embedding of $\,\PP^{d-1}$ into $\,\PP^{N-1}$.
  \end{proposition}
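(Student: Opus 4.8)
The plan is to show that the stated ideal $J := I_{N+1}([L\,\,p]) + I_2(\mathcal R)$ is exactly the prime ideal $I_{d,n}$, by reducing everything to the classical ideal of $2\times 2$ minors of a generic symmetric matrix — the homogeneous ideal of the Veronese variety $\nu_2(\PP^{d-1})$, which is prime with known Betti numbers.

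First I would analyze $I_{N+1}([L\,\,p])$. Laplace expansion of an $(N+1)\times(N+1)$ minor of $[L\,\,p]$ along its last column shows that it is a linear form in $p$ whose coefficients are maximal minors of $L$; hence $I_{N+1}([L\,\,p])$ is generated by linear forms, so it is the ideal of the linear subspace $\Lambda = \PP(\operatorname{colspan} L) \subseteq \PP^{n-1}$ cut out by them. (Note $L$ has full column rank $N$ since $L'$ is invertible.) The one point requiring an argument is that these minors span \emph{all} of $(\operatorname{colspan} L)^{\perp}$: if a vector $v$ is annihilated by every such minor, fix $N$ rows $T$ with $L_T$ invertible, and evaluate the minor indexed by $T \cup \{i\}$; this forces $v_i = \bigl(L (L_T)^{-1} v_T\bigr)_i$ for every $i$, so $v \in \operatorname{colspan} L$. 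Therefore $I_{N+1}([L\,\,p]) = I(\Lambda)$, minimally generated by $\operatorname{codim}\Lambda = n - \binom{d+1}{2}$ linear forms.

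Next I would pass to the quotient and establish primality. Since $L'$ is invertible, $p_1,\dots,p_N$ restrict to a linear coordinate system on $\Lambda$, so $\CC[p] / I_{N+1}([L\,\,p]) \cong \CC[p_1,\dots,p_N]$, and $r = L'^{-1}(p_1 \cdots p_N)^{T}$ is another such system; arranging the $N$ independent linear forms $r$ into the symmetric matrix $\mathcal R$ identifies $\mathcal R$ with the \emph{generic} symmetric $d \times d$ matrix. Hence $I_2(\mathcal R)$ becomes the classical symmetric $2\times 2$ determinantal ideal, which is prime with quotient the second Veronese subring $\bigoplus_k \CC[x_1,\dots,x_d]_{2k}$. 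It follows that $\CC[p]/J$ is a domain of Krull dimension $d$, so $J$ is prime and $V(J)$ is irreducible of dimension $d-1$.

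To conclude $J = I_{d,n}$, I would check that the parametrization lands in $V(J)$: if $p = (\ell_i^2(x))_i$, then $(p_1 \cdots p_N)^{T} = L' \mu(x)$ for the vector $\mu(x)$ of quadratic monomials, so $r = \mu(x)$, the matrix $\mathcal R = x x^{T}$ has rank $1$, and $p \in \Lambda$; thus $X_{d,n} \subseteq V(J)$. Since $V(J)$ is irreducible of the same dimension $d-1$ as $X_{d,n}$, we get $V(J) = X_{d,n}$, and then $J = I(X_{d,n}) = I_{d,n}$ because $J$ is prime. The same computation exhibits the parametrization as the composite $\PP^{d-1} \xrightarrow{\nu_2} \PP^{N-1} \xrightarrow{\sim} \Lambda \hookrightarrow \PP^{n-1}$ of the Veronese with linear isomorphisms onto their images, giving $X_{d,n} \cong \nu_2(\PP^{d-1})$. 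The generator count then follows: the linear generators form a basis of $I(\Lambda)_1$, of dimension $n - \binom{d+1}{2}$, and the quadratic generators correspond to the minimal quadratic generators of $I_2$ of the generic symmetric $d\times d$ matrix, whose number, read off from the Hilbert function of the second Veronese, equals $\binom{\binom{d+1}{2}+1}{2} - \binom{d+3}{4} = \tfrac1{12}(d+1)d^2(d-1)$. The only step needing real care is the first — that the maximal minors of $[L\,\,p]$ already generate the full linear ideal of $\Lambda$, and not merely an ideal with the same radical; after that, primality and the Betti numbers are classical facts about symmetric determinantal ideals.
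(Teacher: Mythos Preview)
Your argument is correct and follows essentially the same strategy as the paper: both show $J \subseteq I_{d,n}$, reduce modulo the linear forms to identify $I_2(\mathcal R)$ with the classical prime ideal of $2\times 2$ minors of a generic symmetric matrix, and conclude $J=I_{d,n}$ from primality plus the codimension/dimension match. Your treatment is in fact more careful than the paper's on the point you flag (that the maximal minors of $[L\,\,p]$ generate the full linear ideal of $\Lambda$, not merely up to radical), and you obtain the quadratic generator count via the Hilbert function of the second Veronese rather than via the Schur module $S_{2,2}(\CC^d)$ as the paper does; both routes yield $\tfrac{1}{12}(d+1)d^2(d-1)$.
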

  
  \begin{proof}
    The $(N+1) \times (N+1)$ minors
    of  $[L\,\, p]$ give linear forms in $p$ that vanish on the variety $X_{d,n}$.
    Since $L$ has rank $N$, precisely $n-N$ of these linear forms are linearly independent.

By construction, the matrix $\mathcal{R}$ has rank one on the variety $X_{d,n}$.
This means that the $2 \times 2$ minors of $\mathcal{R}$ lie in the ideal $I_{d,n}$.
Together with the $n-N$ independent linear forms from $[L\,\, p]$ they
generate a prime ideal of the correct codimension.
Hence this ideal equals~$I_{d,n}$.

A symmetric $d \times d$ matrix has $\binom{\binom{d}{2}+1}{2}$ minors
of size two. These are linearly dependent for $d \geq 4$. Namely, they
obey  $\binom{d}{4}$
linear relations. Hence the space spanned by the
$2 \times 2$ minors of a generic symmetric $d \times d$ matrix has dimension
$  \frac{1}{12}(d+1) d^2 (d-1)$.
This is also the dimension of
the Schur module $S_{2,2}(\CC^d)$, so our count follows from
\cite[Proposition 6.10.4.1]{Landsberg_Tensors}.

  We claim that $X_{d,n}$ is isomorphic to $\nu_2(\PP^{d-1})$.
  Indeed, the coordinate ring of $X_{d,n}$~is
  $$
    \CC[p_1,\ldots,p_n]/I_{d,n} \,=\, \CC[p_1,\ldots,p_n]/(I_2(\mathcal R) + I_{N+1}([L\,\, p])) \,\cong\,
    \CC[p_1,\ldots,p_N]/I_2(\mathcal R).
$$
  The isomorphism follows from the fact that the maximal minors of $[L\,\, p]$ express $p_{N+1},
  \ldots,p_n$  as linear combinations of  $p_1, \ldots, p_N$. 
  Now, apply ${\rm Proj}$ to these isomorphic graded rings.
    \end{proof}

The braid arrangements in Example \ref{ex:braid} yield an infinite family of models
with $n=N$.
The first instance in this family of quadratic Veronese varieties is the plane curve
$\nu_2(\PP^1) \subset \PP^2$.

\begin{example}
Let $d =2,n=N=3$ and consider the linear forms $\{x,y,x + y\}$. We have
$$     L \,=\, L' \,=\, \begin{pmatrix}
        1 & 0 & 0\\
        0 & 0 & 1\\
        1 & 2 & 1
        \end{pmatrix}
\quad \hbox{and hence} \quad\,\,
        L'^{-1}p \,=\, \frac 1 2 \begin{pmatrix}
        \,\,2 & \,\,0 & 0 \,\\
        \! -1 & \!-1 & 1\,\\
        \,\,0 & \,\,2 & 0\,
        \end{pmatrix}  p      
        \,\,=\,\, \frac 1 2 \begin{pmatrix}
            2p_1 \\ p_3 \!-\! p_1 \!-\! p_2\\ 2p_2
        \end{pmatrix}\!.
$$
    The model $\,X_{2,3} \cong \nu_2(\PP^1)\,$ is the conic in the 
    plane $\PP^2$ defined by $\,4p_1p_2 = (p_3 - p_1 - p_2)^2$.
  \end{example}

\begin{example}
  Consider the seven lines in $\PP^2$ defined by $\{x, y, z, x+y+z, x+2y+3z, x+5y+7z, x+11y+13z\}$. 
  Here, $d=3, n=7$ and  $N = \binom{3+1}{2} = 6$. We consider the $7 \times 7$ matrix
  \[
      [L\,\, p] \,\,\,=\,\,\,  \begin{footnotesize} \begin{pmatrix}
        1 & 0 & 0 & 0 & 0 & 0 & p_1 \\
        0 & 0 & 1 & 0 & 0 & 0 & p_2 \\
        0 & 0 & 0 & 0 & 0 & 1 & p_3 \\ 
        1 & 2 & 1 & 2 & 2 & 1 & p_4 \\
        1 & 4 & 4 & 6 & 12 & 9 & p_5 \\
        1 & 10 & 25 & 14 & 70 & 49 & p_6 \\
        1 & 22 & 121 & 25 & 286 & 169 & p_7 \\
      \end{pmatrix}.\end{footnotesize}
  \]
  The first six columns are labeled by $(x^2, xy, y^2, xz, yz, z^2)$. The upper left $6 \times 6$ block 
  of $[L\,\,p]$ is the matrix $L'$. Its inverse yields the vector $\,r  = L'^{-1} p$, whose coordinates are 
  the entries~in
  $$ \mathcal{R} \,\, = \,\, \begin{pmatrix} r_1 & r_2 & r_4 \\
                                                 r_2 & r_3 & r_5 \\
                                          r_4 & r_5 & r_6 
  \end{pmatrix}.
  $$ 
The prime ideal $I_{d,n}$ of the model $X_{3,6} \subset \PP^5$ is generated by one linear form and six quadrics.
The linear form is the
determinant  of $[L \,\, p ]$, and the quadrics are the $2 \times 2$ minors of $\mathcal{R}$.
\end{example}

We now turn to the regime where $n$ is smaller than $N = \binom{d+1}{2}$.
Here the ideal generators are much more complicated than in Proposition \ref{prop:veronese}.
Not much is known about the ideals of  such projections in general.
The following result collects what we know in small dimensions.

\begin{proposition}
    \label{prop:ideal_computations}
    Table \ref{tab:deg_mingens} gives
    the degrees of the generators of $I_{d,n}$ for
    $d \leq 6 $ and $n < N$.
    \end{proposition}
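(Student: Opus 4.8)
The plan is to compute each ideal $I_{d,n}$ by implicitization and then extract the degrees of a minimal homogeneous generating set; Proposition~\ref{prop:ideal_computations} asserts that these computations terminate and yield the entries of Table~\ref{tab:deg_mingens}. A few entries can be pinned down a priori and used as checks. When $n=d+1$ the variety $X_{d,n}$ has dimension $d-1$ in $\PP^d$, hence is a hypersurface; since the parametrization is birational for a generic arrangement (see the remark after Proposition~\ref{prop:components}), $I_{d,n}$ is principal, generated by one form of degree $\deg X_{d,n}=2^{d-1}$. This recovers the Steiner quartic of Example~\ref{ex:d=3n=4} ($d=3$, degree $4$) and fixes the $n=d+1$ column. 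For $n<N=\binom{d+1}{2}$ the variety $X_{d,n}$ is a linear projection of the non-degenerate Veronese $\nu_2(\PP^{d-1})\subset\PP^{N-1}$ into $\PP^{n-1}$, so it is itself non-degenerate; therefore $I_{d,n}$ contains no linear form and its generators begin in degree two, which is consistent with the structure of the table. At the boundary $n=N$ one must recover Proposition~\ref{prop:veronese}.

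For the remaining entries I would proceed computationally in \texttt{Macaulay2} or \texttt{OSCAR}. Fix $d$ and $n<N$, choose linear forms $\ell_1,\dots,\ell_n$ in $x_1,\dots,x_d$ with random coefficients, and form the ideal $J=\bigl(p_1-\ell_1^2(x),\dots,p_n-\ell_n^2(x)\bigr)$ in $\CC[x,p]$; since the generic $\ell_i$ have no common zero, the parametrization is a morphism and $I_{d,n}=J\cap\CC[p]$ is obtained directly by a Gröbner basis in an elimination order that eliminates $x_1,\dots,x_d$, with no saturation required. Equivalently, using that $\nu_2(\PP^{d-1})$ has the determinantal ideal $I_2(\mathcal Z)$ for $\mathcal Z$ the generic symmetric $d\times d$ matrix, one can write $I_{d,n}=\bigl(I_2(\mathcal Z)+(p_i-(Lz)_i:i\in[n])\bigr)\cap\CC[p]$ and eliminate the $z$-variables. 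Once $I_{d,n}$ is computed, running \texttt{mingens} produces a minimal homogeneous generating set whose degree multiset is the table entry. As sanity checks I would verify that $I_{d,n}$ is prime, has Krull dimension $d$, and has degree $2^{d-1}$, and that its graded Betti numbers are stable under re-randomization of the $\ell_i$, certifying that the chosen arrangement is sufficiently generic.

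The two difficulties are genericity and cost. For genericity, since the statement concerns a \emph{generic} arrangement, a single random choice might land on the proper closed locus where the Betti numbers jump; the remedy is to repeat with several independent random choices and check that the degree sequence is stable, or, where feasible, to run the computation symbolically over the field $\QQ(c_{i\alpha})$ generated by the coefficients of the $\ell_i$. The main obstacle, and where essentially all the work lies, is the size of the elimination: for $d=6$ we have $N=21$, so the hard cases (small $n$, e.g.\ $n=7,8$) require eliminating many variables from an ideal in more than twenty variables, and Gröbner basis computations of this scale over $\QQ$ are typically infeasible because of coefficient swell. The practical fix, carried out in the scripts on the Zenodo page, is to work modulo a large prime: this both controls the arithmetic and returns the correct generator degrees with overwhelming probability, after which the smaller cases can be re-verified over $\QQ$. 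Choosing term and variable orders that keep the intermediate bases from exploding is the remaining engineering issue; the underlying mathematics is routine implicitization.
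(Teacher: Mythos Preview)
Your approach is essentially identical to the paper's: the proof there is a brief computational remark stating that the entries were obtained via Gr\"obner bases in \texttt{Oscar.jl} over $\QQ$ or finite fields, followed by extraction of minimal generators, with genericity handled exactly as you suggest. The only nuance you overlook is that the paper flags the gray entries $(6,8)$, $(6,9)$, $(6,10)$ as incomplete---those record degrees of Gr\"obner basis elements rather than minimal generators, because the minimal-generator extraction did not terminate; your plan implicitly assumes every entry is a minimal-generator degree.
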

\vspace{-0.25em}
\begin{table}[h]
\vspace{-0.3in}
    \centering
    \[ 
    \begin{array}[t]{c c}
         (d,n) & \text{Degrees} \\
         \hline
         (3,4) & 4^1 \\
         (3,5) & 3^7 \\
         \\
         (4,5) & 8^1 \\
         (4,6) & 4^1 5^{10} 6^2 \\
         (4,7) & 4^{45} \\
         (4,8) & 2^2 3^{20} 4^1 \\
         (4,9) & 2^{10} \\
    \end{array}
    \begin{array}[t]{|c c}
         (d,n) & \text{Degrees} \\
      \hline
         (5,6) & 16^1  \\
         {\bf(5,7)} & {\bf 9^{71}10^{35}11^1} \\
         (5,8) & 5^1 6^{98} \\
         (5,9) & 5^{286}  \\ 
         (5,10) & 3^{10} 4^{120}  \\
         (5,11) & 3^{76}  \\
         (5,12) & 2^8 3^{58} \\
         (5,13) & 2^{21} 3^2  \\
         (5,14) & 2^{35}  \\
    \end{array}
    \begin{array}[t]{|c c}
         (d,n) & \text{Degrees}\\
         \hline
         (6,7) & 32^1 \\
         (6,8) & \textcolor{gray}{15^1 16^{178} 17^{3708} 18^{11} } \\
         (6,9) & \textcolor{gray}{10^{1338} 11^{1668}}\\
         (6,10) & \textcolor{gray}{7^{694} 8^{1158}}\\
         (6, 11) & 6^{1820} \\
         (6,12) & 4^{78} 5^{429} \\
         (6, 13) & 4^{533} \\
    \end{array}
    \begin{array}[t]{|c c}
         (d,n) & \text{Degrees}\\
      \hline
         (6, 14) & 3^{98} \\
         (6, 15) & 3^{218} \\
         (6, 16) & 2^{10} 3^{194} \\
         (6, 17) & 2^{27} 3^{48} \\
         (6, 18) & 2^{45} \\
         (6, 19) & 2^{64} \\
         (6,20) & 2^{84} \\
    \end{array} \vspace{-0.2in}
    \]
    \caption{Degrees of the minimal generators (multiplicities in the exponents) for the prime ideals $I_{d,n}$.
    The {\color{gray} gray} entries refer not to minimal generators but to Gröbner basis elements.}
    \label{tab:deg_mingens}
\end{table}
\vspace{-1.25em}
\begin{proof}[Proof and Discussion]
Proposition \ref{prop:ideal_computations}
 was obtained by computer algebra, utilizing the Gr\"obner basis implementation in \texttt{Oscar.jl} \cite{oscar}.
Computations were performed over the rational numbers or finite fields.
We computed Gr\"obner bases and extracted
minimal generators. The cases $(6,8),(6,9),(6,10)$ are more challenging and will be completed in the future.

The ideal $I_{d,n}$ comprises relations among squares
      $\ell_i^2$ of generic linear forms $\ell_i$.
     If we replace these squares with generic quadrics
     then most entries of Table \ref{tab:deg_mingens} remain unchanged.
     The smallest exception is
     $(d,n) = (5,7)$.
      Here, generic quadrics yield different numbers of minimal generators.
The prime ideal $I_{5,7}$ has $107 $ minimal generators. There are
$71$ generators in degree $9$, and $35$ in degree $10$, and one in degree $11$.
By contrast, the prime ideal of relations among seven generic quadrics in
five variables requires $112$ minimal generators, namely
$70$ in degree $9$ and $42$ in degree $10$.
The $(5,7)$ entry would 
be ${\bf 9^{70} 10^{42}}$ for generic quadrics.
\end{proof}

What follows is a brief discussion of
the prime ideal $I$  for the squared linear model $X$ given by an arbitrary hyperplane arrangement
$\mathcal{A}$. The linear forms $\ell_1,\ell_2,\ldots, \ell_n$
are allowed to be special. The ideal $I$ was studied for small $d$ by
Dey, G\"orlach and Kaihnsa in \cite[Section~4.3]{DGK}.
Their analysis rests on the point configuration dual to the arrangement $\mathcal{A}$.
 Let $A_i = \nabla_x \ell_i(x)  \in \R^d$ be the vector of coefficients of $\ell_i$.
We view $A_1,\ldots,A_n$ as points in $\PP^{d-1}$.

The ideal $I$ depends on the  space of quadrics in $\PP^{d-1}$ that pass through the points $A_1,\ldots,A_n$.
If there are no such quadrics then we are in the generic situation of Proposition~\ref{prop:veronese}.
If the $A_i$ span a unique quadric that is irreducible then $I$ is generated
by $n-N-1$ linear forms together with the nonlinear equations of $X_{d,N-1}$.
For instance, if $d=3$ and $A_1,A_2,\ldots,A_n$ lie on a conic in $\PP^2$
then $I$ is generated by $n-5$ linear forms and $7$ cubics.
This is case (a) in \cite[Theorem 4.9 (ii)]{DGK}. If the $A_i$ lie on several
quadratic hypersurfaces then we are led to a case distinction 
as in \cite[Theorem 4.9 (iii)]{DGK}. See also \cite[Section 4.2]{DGK} for
 an interesting connection to the variety of
symmetric matrices with degenerate eigenvalues.

\smallskip

We now turn to the question
whether the generic model $X_{d,n}$ is smooth.
This happens when $n$ is large relative to $d$.
For instance, if $d=3$ then the model is
singular for $n=4$, by Example \ref{ex:d=3n=4},
but it is smooth for $n \geq 5$. This threshold is explained
by the following~theorem.

\begin{theorem} \label{thm:singularities} If $n > 2d-2$ then
 the generic squared linear model $X_{d,n}$ is smooth in $\PP^{n-1}$.
If $n \leq 2d-2$ then $X_{d,n}$ is singular, and the singular locus has dimension $2d -n-1$.
It is the image of all linear subspaces in $\PP^{d-1}$ on which the map $x \mapsto (\ell_1^2(x): \cdots : \ell_n^2(x))$ is not injective.
In particular, for $n=2d-2$ the singular locus of $\,X_{d,n}$ consists of $\frac{1}{2} \binom{2d-2}{d-1}$ lines.
\end{theorem}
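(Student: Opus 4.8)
The plan is to realize $X_{d,n}$ as the image of a generic linear space under the coordinatewise squaring map and to read off the singularities from the fibers and ramification of that map.

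First I would fix notation. Let $\sigma\colon\PP^{n-1}\to\PP^{n-1}$, $(y_1:\cdots:y_n)\mapsto(y_1^2:\cdots:y_n^2)$, be the coordinatewise squaring map; it is finite. The $n$ generic linear forms give an embedding $\PP^{d-1}\hookrightarrow\PP^{n-1}$, $x\mapsto(\ell_1(x):\cdots:\ell_n(x))$, whose image is a generic $(d-1)$-plane $\Lambda\subset\PP^{n-1}$ with underlying vector space $V_\Lambda\subset\CC^n$, and the parametrization $\phi\colon\PP^{d-1}\to X_{d,n}$ of the model is precisely $\sigma$ restricted to $\Lambda$. Since $\sigma$ is finite, $\Lambda\cong\PP^{d-1}$ is smooth, and $\phi$ is birational onto $X_{d,n}$ for generic $\Lambda$ (as $n>d$ forces $\delta\Lambda\neq\Lambda$ for every sign vector $\delta\neq\pm\mathbf 1$, so the general fibre of $\phi$ is a point), the map $\phi$ is the normalization of $X_{d,n}$. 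Hence $X_{d,n}$ is smooth at a point $p$ if and only if $\phi^{-1}(p)$ is a single point at which $\phi$ is unramified.

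Next I would exploit the elementary structure of the fibres. One has $\sigma(y)=\sigma(y')$ exactly when $y'=\delta\cdot y$ for a sign vector $\delta\in\{\pm1\}^n$ acting coordinatewise (with $\delta\sim-\delta$), so the extra preimages of $\phi$ come from the intersections $\Lambda\cap\delta\Lambda$. The key lemma is that, writing $V_+(\delta),V_-(\delta)$ for the $\pm1$-eigenspaces of $\delta$, one has $V_\Lambda\cap\delta V_\Lambda=(V_\Lambda\cap V_+(\delta))\oplus(V_\Lambda\cap V_-(\delta))$ for every $\delta$: if $v$ and $\delta v$ both lie in $V_\Lambda$, then so do $v\pm\delta v$, and these are the eigencomponents of $2v$. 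Writing $T\subseteq[n]$ for the support of the $+1$ entries of $\delta$, genericity of $\Lambda$ gives $\dim(\Lambda\cap\delta\Lambda)=\max(0,d+|T|-n)+\max(0,d-|T|)-1$, while the fixed locus $\Lambda\cap\mathrm{Fix}(\delta)$ equals $(\Lambda\cap V_+(\delta))\cup(\Lambda\cap V_-(\delta))$. Call $\delta$ (with $\delta\neq\pm\mathbf 1$) \emph{relevant} if both $\Lambda\cap V_\pm(\delta)$ are nonempty, i.e.\ $n-d+1\le|T|\le d-1$; for relevant $\delta$ the space $L_\delta:=\Lambda\cap\delta\Lambda$ has dimension $2d-n-1$, is contained in no coordinate hyperplane, and its generic point is a genuine double point of $\phi$ (the fixed points form a proper subvariety). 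A parallel, simpler computation shows $\phi$ is ramified exactly at points $y\in\Lambda$ whose coordinate zero-set $S$ satisfies $|S|\ge n-d+1$; each such $y$ has $|S|\le d-1$ and is fixed by the sign vector supported on $[n]\setminus S$, which is relevant, so $y\in L_\delta$. In short, both the multiple-point locus and the ramification locus of $\phi$ lie in $\bigcup_{\delta\ \mathrm{relevant}}L_\delta$.

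Now I would conclude. If $n>2d-2$ there is no relevant $\delta$, so $\phi$ is everywhere injective and unramified, hence an isomorphism by the normalization remark, and $X_{d,n}\cong\PP^{d-1}$ is smooth. If $n\le 2d-2$, set $Z:=\bigcup_{\delta\ \mathrm{relevant}}\phi(L_\delta)$, a closed set of dimension $2d-n-1$. Over $\PP^{n-1}\setminus Z$ the map $\phi$ has single, unramified fibres, so $X_{d,n}$ is smooth there; conversely, for a generic point $y$ of an $L_\delta$ the two local analytic branches $\phi(\Lambda,y)$ and $\phi(\Lambda,\delta y)$ of $X_{d,n}$ at $\phi(y)$ are distinct germs, since their coinciding would give every point of $\Lambda$ near $y$ a $\phi$-partner near $\delta y$, forcing $\delta\Lambda=\Lambda$, which is excluded. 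Hence $\phi(y)$, and so all of $\phi(L_\delta)$, lies in $\mathrm{Sing}(X_{d,n})$, and therefore $\mathrm{Sing}(X_{d,n})=Z$; as every line joining a double-point pair lies in some $L_\delta$, this $Z$ is precisely the image of all linear subspaces of $\PP^{d-1}$ on which $x\mapsto(\ell_1^2(x):\cdots:\ell_n^2(x))$ fails to be injective. For $n=2d-2$ the relevant $\delta$ are exactly those with $|T|=d-1$, so each $L_\delta$ is a line; $\phi$ restricts to a $2{:}1$ map $L_\delta\to\phi(L_\delta)$ whose pullback of $\mathcal O(1)$ is $\mathcal O_{\PP^1}(2)$, so the image curve has degree $2/2=1$ and is a line. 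Since $\delta$ and $-\delta$ define the same involution, these lines are indexed by the unordered partitions $[n]=T\sqcup T^c$ with $|T|=d-1$, giving $\tfrac12\binom{2d-2}{d-1}$ of them, distinct for generic $\Lambda$.

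The main obstacle is the local geometry at the singular points: showing that a double point of $\phi$ genuinely produces a singularity of $X_{d,n}$ (the branch-distinctness argument, which relies on the genericity fact that $L_\delta$ avoids all coordinate hyperplanes) and the clean passage between ``smooth point of $X_{d,n}$'' and ``point where the normalization $\phi$ is an isomorphism''. Underlying everything is the routine but necessary genericity bookkeeping: a generic $\Lambda$ meets every coordinate subspace and every $\delta V_\Lambda$ in the expected dimension simultaneously, which holds because these are finitely many dense open conditions on the Grassmannian.
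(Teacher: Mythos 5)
Your argument is correct, and it takes a genuinely different route from the paper's, most visibly in the smooth range. The paper splits $n>2d-2$ into two cases: for $n\ge N=\binom{d+1}{2}$ it invokes Proposition~\ref{prop:veronese} ($X_{d,n}$ is a quadratic Veronese), and for $2d-2<n<N$ it realizes $X_{d,n}$ as a generic linear projection of $\nu_2(\PP^{d-1})$ whose center avoids the secant variety $\sigma_2(\nu_2(\PP^{d-1}))$ of dimension $2d-2$, so the projection is an isomorphic embedding. You instead treat all $n>d$ uniformly: $\phi$ is the coordinatewise squaring map restricted to a generic $(d-1)$-plane $\Lambda$, it is finite and birational from a smooth source and hence the normalization of $X_{d,n}$, so smoothness at $p$ is equivalent to a one-point unramified fibre; your eigenspace lemma $V_\Lambda\cap\delta V_\Lambda=(V_\Lambda\cap V_+(\delta))\oplus(V_\Lambda\cap V_-(\delta))$ then controls both the multiple-point and ramification loci at once. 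For $n\le 2d-2$ the combinatorial core coincides with the paper's: your $L_\delta$ are exactly the subspaces $\ker(BA_{I,J})$ indexed by partitions $I\sqcup J$ with $|I|,|J|\le d-1$. But you are more explicit than the paper about why their images are genuinely singular (two distinct analytic branches at the image of a generic point of $L_\delta$, using that $L_\delta$ meets no coordinate hyperplane and that coinciding branches would force $\delta\Lambda=\Lambda$), and about why the images are lines for $n=2d-2$ (the computation $\deg(\phi|_{L_\delta})\cdot\deg\phi(L_\delta)=2$ with $\deg(\phi|_{L_\delta})\ge 2$), points the paper leaves implicit behind its remark that the Jacobian has constant rank off $\mathcal{A}$. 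Your approach buys uniformity (no secant varieties, no case split at $N$) and a clean local criterion for singularity; the paper's buys brevity and a direct tie to Proposition~\ref{prop:veronese}. The genericity bookkeeping you flag does go through: in the branch-distinctness step, finitely many sign vectors plus irreducibility of the germ pin down a single $\delta$, which must equal the given one since the generic point of $L_\delta$ has no zero coordinates; and for the $2{:}1$ count it already suffices that the degree is at least $2$, which forces the image to be a line.
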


\begin{proof}
  For $n \geq N = \binom{d+1}{2}$, the variety $X_{d,n}$ is smooth by Proposition~\ref{prop:veronese}.
      Next suppose $2d-2 < n < N$. Then $X_{d,n}$ is the image of $\nu_2(\PP^{d-1}) \subset \PP^{N-1}$ under a generic linear projection $\pi\colon \PP^{N-1} \dashrightarrow \PP^{n-1}$. The center $E$ of $\pi$ has dimension   $N-n-1$. 
      The secant variety $\sigma_2(\nu_2(\PP^{d-1}))$ has dimension $2d-2$, by \cite[Exercise 5.1.2.4]{Landsberg_Tensors}. 
      Therefore, since $2d-2 < n$, a general linear space $E$ does not meet the secant variety of 
      $\nu_2(\PP^{d-1})$.   Thus, by~\cite[Corollary 2.7]{shafarevich},
     $\pi$ defines an isomorphic embedding
    of $\PP^{d-1}$ into $\PP^{n-1}$, and smoothness is preserved.

Now, let $n \leq  2d-2$. Since $\mathcal{A}$ is generic, the 
rank of the Jacobian of our map is~constant on $\PP^{d-1} \backslash \mathcal{A}$.
All singularities of $X_{d,n}$ arise from 
pairs $\{x,x'\}$ of distinct points in $\PP^{d-1}$ which have the same image in $X_{d,n} \subset \PP^{n-1}$.
Since the linear map $A: \PP^{d-1} \to \PP^{n-1}, x \mapsto (\ell_i(x))_{i\in [n]}$  is injective,
these points satisfy $Ax \neq Ax'$.
Hence $Ax$ and $Ax'$ differ only by sign flips. 

Consider any partition $I \sqcup J $ of $[n]= \{1, \ldots, n\}$.
 Let $A_{I,J}$ be the matrix whose $i$th row is $A_i$  if $i \in I$ and $-A_i$ if $i \in J$. 
Then $(Ax)^2 = (Ax')^2$ exactly when $Ax' = A_{I,J}\,x$
for some partition $I \sqcup J = [n]$.
Note that if $|I| \geq d$ or $|J| \geq d$, then $x'$ is forced to equal $x$. 
Therefore the model parametrization is non-injective precisely on the 
subspaces $\ker(BA_{I,J}) \subset \PP^{d-1}$, where the partition
$I \sqcup J=[n]$ satisfies $|I|, |J| \leq d-1$.
Here  $B$ denotes an $(n-d) \times d$ matrix 
with ${\rm ker}(B) = {\rm im}(A)$.
The subspaces $\ker(BA_{I,J})$  have dimension $2d - n - 1$ as desired. 

In the boundary case $n=2d-2$, we have $|I| = |J| = d-1$.
There are precisely $\frac{1}{2}\binom{2d-2}{d-1}$ such partitions  $I \sqcup J = [n]$.
Each of them contributes a line to the singular locus of $X_{d,n}$.
\end{proof}

\begin{example}[$d=4$]
The variety $X_{4,n}$ is smooth for $n \geq 7$. For $n=6$, our model is a
threefold of degree $8$ in $\PP^5$, defined by one quartic, ten quintics and two sextics.
The singular locus of  $X_{4,6}$ consists of ten lines, one for each
partition of $\{1,2,\dots,6\}$ into two triples.

 For $n=5$,
we take $\ell_i = x_i$ for $i=1,2,3,4$ and $\ell_5 = x_1+x_2+x_3+x_4$.
Then $I_{4,5}$ is generated by one polynomial of degree $8$ with
$495$ terms. The singular locus of the threefold $X_{4,5} $ 
decomposes into ten irreducible surfaces in $\PP^4$. Each of these is a quadratic cone, like
$$ V(p_1-p_2, \,p_3^2+p_4^2+p_5^2-2 p_3 p_4-2 p_3 p_5-2 p_4 p_5) \quad = \quad
\hbox{image of} \,\,\, \bigl[V(\ell_1,\ell_2) + V(\ell_3,\ell_4,\ell_5) \bigr]. $$
It would be interesting to derive a combinatorial rule for the singular locus of $X$ in general.
\end{example}

\section{Likelihood Correspondence}
\label{sec4}

In this section, we study the likelihood correspondence of the squared linear model
given by $n$ linear forms $\ell_1,\ldots,\ell_n$ in $d$ variables $x=(x_1,\ldots,x_d)$.
The likelihood correspondence is a central object in algebraic statistics \cite{HS, KSSW}.
It captures the relationship between data and critical points of the log-likelihood function
\begin{equation}
\label{eq:loglike} \Lambda_\mathcal{A}(s,x) \,\, = \,\,
\sum_{i=1}^n s_i \,{\rm log}(p_i(x))  \,\,=\,\,
\sum_{i=1}^n s_i \,{\rm log}(\ell_i^2(x))
\,  - \, \,{\rm log}(q(x)) \sum_{j=1}^n s_j .
\end{equation}
As before, $q = \ell_1^2 + \cdots + \ell_n^2$ is the partition function, and
the parameters $s_1,\ldots,s_n $ represent the data.
The partial derivatives of $\Lambda_\mathcal{A}$ with respect to
$x_1,\ldots,x_d$ are homogeneous rational functions. These
depend linearly on $s_1,\ldots,s_n$. Our object of interest is the
variety in the product space of all pairs $(s,x)$ which is defined by setting these rational functions to zero:
\begin{definition}\label{def:likelihoodCorr}
  The {\em likelihood correspondence} $\mathcal{L}_{\mathcal{A}}$ is the Zariski closure in $\PP^{n-1}\times \PP^{d-1}$ of
  \[
    \left\lbrace (s,x)\in \PP^{n-1}\times \PP^{d-1}  \,\colon\,
       \ell_1(x) \cdots \ell_n(x) q(x) \not= 0,\,\, p(x)\in X_{\mathrm{reg}} \,\,{\rm and} \,\,\,\,
      \frac{\partial \Lambda_\mathcal{A}}{\partial x_i}(s,x)\,=\, 0 \,\, \forall i\in [d] 
             \right\rbrace.
  \]
  The \emph{likelihood ideal} $I_\mathcal{A} \subset \CC[s_1,\dots,s_n,x_1,\dots,x_d]$ is the bihomogeneous prime ideal of~$\mathcal{L}_\mathcal{A}$.
\end{definition}

The projection $\,\PP^{n-1} \times \PP^{d-1} \rightarrow \PP^{n-1}, \,(s,x) \mapsto s \,$
induces a finite-to-one map $\,\mathcal{L}_\mathcal{A} \rightarrow \PP^{n-1}$.
The degree of this map is the ML degree. By Corollary \ref{cor:MLdegree}, the ML degree is
equal to the number of regions of the hyperplane arrangement. Thus,
all fibers are fully real.  
Each connected component of $\PP^{d-1}_\R \backslash \mathcal{A}$
is represented by one point in each fiber of $\,\mathcal{L}_\mathcal{A} \rightarrow \PP^{n-1}$. 

Let $A$ denote the $n \times d$ Jacobian matrix of our linear forms. Thus,  the rows of $A$ 
are the gradient vectors $A_i = \nabla_x \ell_i(x)$ for $i=1,\ldots,n$.
In the following theorem, we assume that $A$~is generic.
In particular, the rank $d$ matroid of $A$ is uniform,
and $q$ is transverse to the generic arrangement $\mathcal{A}$ in $\PP^{d-1}$.
Fix any $(n-d) \times n$ matrix $B$ whose kernel equals the image of $A$.
The $n-d$ rows of $B$ span the linear relations among the
linear forms $\ell_1(x),\ell_2(x),\ldots,\ell_n(x)$.

Our main result in this section is the following explicit description of the prime ideal $I_\mathcal{A}$.

\begin{theorem} \label{thm:likely}
The likelihood ideal for the generic model $X_{d,n}$ is minimally generated by the maximal minors of the matrix
\begin{equation}
\label{eq:likelihoodmatrix} M \,=\, \begin{pmatrix}
s_1 \quad s_2  \quad \cdots \quad s_n \smallskip \\
\ell_1^2  \quad \ell_2^2 \quad \cdots \quad \ell_n^2 \smallskip \\
 B \cdot {\rm diag}(\ell_1,\ldots,\ell_n)
\end{pmatrix}.
\end{equation}
This matrix has $n-d+2$ rows and $n$ columns, and its $\binom{n}{d-2}$ minors all have degree $(1,n-d+2)$.
\end{theorem}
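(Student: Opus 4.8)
The plan is to recognize the likelihood equations of $X_{d,n}$ as a determinantal rank condition on the matrix $M$ of \eqref{eq:likelihoodmatrix}, and then to identify $I_{n-d+2}(M)$ with the likelihood ideal using the theory of ideals of maximal minors of maximal expected codimension, along the lines of \cite[Section~2]{KSSW}. First I would compute the gradient of \eqref{eq:loglike}: writing $\sigma = s_1+\cdots+s_n$ and $u = \bigl(s_1/\ell_1(x),\ldots,s_n/\ell_n(x)\bigr)^T$, and using $\ell_i(x)=(Ax)_i$ and $q(x)=x^TA^TAx$, one finds $\partial\Lambda_\mathcal{A}/\partial x_k = 2\bigl(A^Tu - \tfrac{\sigma}{q(x)}A^TAx\bigr)_k$. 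Thus a pair $(s,x)$ with $\ell_1(x)\cdots\ell_n(x)q(x)\neq 0$ is a critical point exactly when $u-\tfrac{\sigma}{q}Ax$ lies in $\ker A^T$; since $BA=0$ and $\dim\ker A^T = n-d = {\rm rank}\,B$, this kernel is the row space of $B$. Multiplying the $i$th coordinate by $\ell_i(x)$ turns criticality into the statement that $s-\tfrac{\sigma}{q}\bigl(\ell_1^2(x),\ldots,\ell_n^2(x)\bigr)^T$ lies in the row span of $B\cdot{\rm diag}\bigl(\ell_1(x),\ldots,\ell_n(x)\bigr)$, i.e.\ the top row of $M$ is a combination of its remaining $n-d+1$ rows, i.e.\ ${\rm rank}\,M\le n-d+1$. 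As $M$ has $n-d+2$ rows and $n\ge n-d+2$ columns, this is the vanishing of its $\binom{n}{d-2}$ maximal minors. Conversely, on the dense open set $U=\{\ell_1\cdots\ell_n q\neq 0\}$ the lower $n-d+1$ rows of $M$ are generically independent (because $(\ell_i^2(x))_i = {\rm diag}(\ell_i(x))\cdot Ax$ with $Ax\in{\rm im}\,A=\ker B$, which is complementary to the row space of $B$), so a rank drop yields $s=\lambda(\ell_i^2)_i+{\rm diag}(\ell_i)B^Tz$ for some scalar $\lambda$; summing coordinates and using $BAx=0$ forces $\lambda=\sigma/q(x)$, recovering the critical equations. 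Hence $V(I_{n-d+2}(M))$ and $\mathcal{L}_\mathcal{A}$ agree on a dense open subset of $U$, where the parametrization also maps onto $X_{\rm reg}$ by Theorem~\ref{thm:singularities}.

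Next I would upgrade this to an equality of ideals. Fixing $x$ off $\mathcal{A}$ in the computation above, the fibre of $\mathcal{L}_\mathcal{A}\to\PP^{d-1}$ is the $\PP^{n-d}$ cut out by the linear conditions just found, so $\mathcal{L}_\mathcal{A}$ is irreducible of dimension $n-1$, and its prime ideal contains $J:=I_{n-d+2}(M)$. The key point is that $\dim V(J)=n-1$, equivalently that $J$ has the maximal possible codimension $d-1$ for the ideal of maximal minors of an $(n-d+2)\times n$ matrix; for this I would show $V(J)\setminus U$, which lies in $\bigcup_i V(\ell_i)\cup V(q)$, has dimension at most $n-2$, by inspecting what happens when a column of $M$ acquires zeros (deleting such a column leaves a matrix whose bottom rows remain generically of full rank, which forces the corresponding $s_i$ to vanish). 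Granting the codimension, the Eagon--Northcott theorem makes $J$ perfect, so $R/J$ is Cohen--Macaulay and $J$ is unmixed; its unique associated prime of codimension $d-1$ is the ideal of $\mathcal{L}_\mathcal{A}$, since $V(J)\setminus U$ is lower-dimensional. Finally $V(J)$ is smooth, hence reduced, at a general point of $\mathcal{L}_\mathcal{A}$, because there the maximal minors impose $d-1$ independent linear conditions on $s$ by the first step; a generically reduced Cohen--Macaulay ideal is radical, so $J=I_\mathcal{A}$.

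For the numerical claims: a maximal minor of $M$ takes one entry from each of its $n-d+2$ rows, of bidegrees $(1,0)$ (the $s$-row), $(0,2)$ (the $\ell_i^2$-row), and $(0,1)$ (the $n-d$ rows of $B\cdot{\rm diag}(\ell_i)$), so every maximal minor is bihomogeneous of bidegree $(1,\,n-d+2)$, and there are $\binom{n}{n-d+2}=\binom{n}{d-2}$ of them. Since $M$ has no nonzero constant entries, the Eagon--Northcott complex resolving $R/J$ is minimal, and its first differential has the maximal minors as a basis of relations; hence these $\binom{n}{d-2}$ minors minimally generate $I_\mathcal{A}$.

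The main obstacle is the dimension estimate $\dim\bigl(V(J)\setminus U\bigr)\le n-2$ in the second step. The gradient computation and the homological bookkeeping (Eagon--Northcott, Cohen--Macaulayness, minimality of the resolution, the bidegree count) are routine, but excluding spurious components of $V(J)$ supported on the hyperplanes of $\mathcal{A}$ or on the partition-function quadric $V(q)$ is exactly where the genericity of $\mathcal{A}$ — the uniformity of its matroid and the transversality of $q$ from Lemma~\ref{lem:generic_quadric} — must be exploited; this is the part that the framework of \cite[Section~2]{KSSW} is designed to handle.
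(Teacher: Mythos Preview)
Your proposal is correct and arrives at the same determinantal description, but by a different route from the paper's proof. The paper does not compute $\nabla_x\Lambda_{\mathcal A}$ directly; instead it views the log-likelihood as that of the (unsquared) arrangement $\mathcal A'=\{q,\ell_1,\ldots,\ell_n\}$ after a substitution in the $s$-variables, verifies that $\mathcal A'$ is strict normal crossing (using Lemma~\ref{lem:generic_quadric} plus a short dimension argument), and then cites \cite[Corollary~2.4]{KSSW} to conclude that the likelihood ideal is generated by the maximal minors of the larger $(n{+}2)\times(n{+}d)$ matrix $Q$ in~\eqref{eq:likelihoodmatrix2}. It then multiplies $Q$ on the left by an explicit invertible matrix $T$ and uses the invertibility of $A^TA$ to strip off the last $d$ rows and columns, yielding exactly your matrix $M$; minimality follows because $M$ has no constant entries. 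Your approach trades this black-box reduction for a direct gradient computation that lands on the rank condition ${\rm rank}\,M\le n{-}d{+}1$ immediately, and then invokes Eagon--Northcott and Cohen--Macaulayness to upgrade the set-theoretic identification to an ideal-theoretic one. The trade-off is that your crucial step---the estimate $\dim\bigl(V(J)\setminus U\bigr)\le n-2$---is precisely the content of the SNC verification that the paper carries out; the paper's route packages that difficulty cleanly as ``check SNC, then cite \cite{KSSW}'', while yours leaves it as a direct stratified computation over $\bigcup_i V(\ell_i)\cup V(q)$. Both arguments buy the same conclusion; the paper's is more modular, yours is more self-contained.
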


\begin{proof}
We use the description of the likelihood ideal for parametric models appearing  in \cite[Definition 2.3]{AandL}
and \cite[Section 1]{KSSW}. Therein, the authors consider
polynomials $f_1,\ldots,f_m$.  When these define a strict normal crossing (SNC) arrangement,
the construction in \cite[equation~(7)]{KSSW} gives a matrix $Q=Q^s_{\backslash 1}$ 
whose maximal minors generate the likelihood ideal.
In that construction, the first polynomial $f_1$ is special.
Namely, the subscript ``$\backslash 1$'' indicates that the column for $f_1$ was deleted from 
another matrix $Q^s$.
In what follows, the partition function $q$ plays the role of $f_1$, and
the linear forms $\ell_1,\ldots,\ell_n$ play the role of $f_2,\ldots,f_m$.

  The log-likelihood function of the squared linear model $X_{d,n}$ is shown in (\ref{eq:loglike}).  
   This function coincides with the log-likelihood function of the (unsquared) arrangement $\mathcal{A}' = \bigl\{ q,\ell_1,\ell_2,\dots,\ell_n \bigr\}$ after substituting 
   $s_0 \mapsto -(s_1 + \cdots +s_n)$ and   $s_i \mapsto 2s_i$ for $i=1,\dots,n$.
   We claim that the arrangement $\mathcal{A}'$ is SNC in the sense of \cite{KSSW}.
   This means that all intersections of hypersurfaces in $\mathcal{A}'$ are smooth of the expected dimension.
   
   Indeed,  by Lemma \ref{lem:generic_quadric}, no intersection of the hyperplanes is tangent to $V(q)$. It remains to show that the hyperplanes meet $V(q)$ in the expected dimension. Without loss of generality, we may work in an affine chart and transform coordinates so that $\ell_i(x) = x_i$ for $i=1,\dots,d$. Assume there exists $x \in V(q) \cap \bigcap_{i=1}^{d-1} V(\ell_i)$. Then $x= (0,\dots,0,x_d)$ and $q(x) = \alpha x_d^2$ for some nonzero constant $\alpha$. Hence, $x\in V(\ell_d)$, so $x$ is in the intersection of $d$ hyperplanes. This is a contradiction to the $\ell_i$ being in general position. Therefore, $\mathcal{A}'$ is strict normal crossing.

It now follows from \cite[Corollary 2.4]{KSSW} that the likelihood ideal $I_\mathcal{A}$ is
generated by the maximal minors of the following matrix, which has $n+2$ rows and $n+d$ columns:
\begin{equation}
\label{eq:likelihoodmatrix2}
    Q \,\,= \,\,\begin{pmatrix}
      0 & 0 & \dots & 0 &  \partial_{x_1} q(x) & \partial_{x_2} q(x) & \dots & \partial_{x_d} q(x) \\
      \ell_1(x) & 0 & \dots & 0 &  \partial_{x_1} \ell_1(x) & \partial_{x_2} \ell_1(x) & \dots & \partial_{x_d} \ell_1(x) \\
      0 & \ell_2(x) & \dots & 0 & \partial_{x_1} \ell_2(x) & \partial_{x_2} \ell_2(x) & \dots & \partial_{x_d} \ell_2(x) \\
      \vdots & \vdots & \ddots & \vdots & \vdots &   \vdots & \ddots & \vdots \\
      0 & 0 & \dots & \ell_n(x) &  \partial_{x_1} \ell_n(x) & \partial_{x_2} \ell_n(x) & \dots & \partial_{x_d} \ell_n(x) \\
     s_1 &s_2 & \dots & s_n &  0 & 0 & \dots & 0 \\
    \end{pmatrix}.
\end{equation}
On the right, we see the $n \times d$ Jacobian matrix
$\,A = \begin{pmatrix} \partial_{x_i} \ell_j(x) \end{pmatrix} \,$
of the $n$ linear forms. And, the row above this is the gradient
$\,\nabla q(x) =  2x^T A^TA$  of the quadratic form $q(x)$.

The image of $A$ spans the kernel of
the $(n-d) \times n$ matrix $B$, so
the $n \times n$ matrix
\begin{small} $\begin{pmatrix} B^T \, A \end{pmatrix}^T \,$\end{small} is invertible.
The following $(n+2) \times (n+2)$ matrix has an inverse with 
polynomial entries:
 $$ 
 T \,\, = \,\,
 \begin{pmatrix}
\,\
\,\, 0 \,&\, 0 &  \!\! \cdots\!\! & 0  \, & \, 1\,\, \\
-1 & \,\ell_1(x) & \!\! \cdots\!\! & \ell_n(x) \,&\, 0 \,\,\\
\,\, 0 &  & B & & \, 0 \,\,  \\
\,\, 0 & & A^T & & \, 0 \,\, 
\end{pmatrix}.
$$
Hence the ideal of maximal minors of $Q$ agrees with that of the
$(n+2) \times (n+d)$  matrix
$$ T \cdot Q \,\, = \,\,
\begin{pmatrix}
s_1  \,\,\,\,s_2 \,\,\,\, \cdots \,\,\,\, s_n \,\,\,\,\,&\, 0 \,\,\, 0\, \,\, \cdots\,\,\,  0 \,\\
\ell_1^2 \,\,\,\, \ell_2^2 \, \,\,\,\cdots\, \,\,\, \ell_n^2 \,\,\,\,\,& -x^TA^TA \\
 B \cdot {\rm diag}(\ell_1,\ldots,\ell_n) & \,0 \,\,\,0 \,\,\,\cdots\,\,\, 0 \, \smallskip \\ 
 A^T \cdot {\rm diag}(\ell_1,\ldots,\ell_n) & A^T \cdot A \\
\end{pmatrix}.
$$

Since the rows of $A^T$ are linearly independent,
the  symmetric $d \times d $ matrix $A^T A $ is~invertible.
Hence we can replace $T\cdot Q$ by its submatrix 
given by the first $n$ columns and the first $2+n-d$ rows.
That submatrix is precisely the $(n-d+2) \times n$ matrix 
in (\ref{eq:likelihoodmatrix}). We have thus shown that
$I_\mathcal{A}$ is generated by the  maximal minors of (\ref{eq:likelihoodmatrix}).
All generators have the same degree $(1,n-d+2)$. And, unlike $Q$, the matrix (\ref{eq:likelihoodmatrix})
has no constant entries. This ensures that the 
$\binom{n}{d-2}$ maximal minors of (\ref{eq:likelihoodmatrix})
are  minimal generators for the prime ideal $I_\mathcal{A}$.
\end{proof}

Our argument confirms that the ML degree of 
$X_{d,n}$ equals the ML degree of the arrangement  $\mathcal{A}'$. According to \cite{KSSW},
the latter is the coefficient of $z^{d-1}$ in the generating~function
\begin{equation}
\label{eq:genfcn}
    \frac{1}{(1-z)^{n-d}\,(1-2z)}.
\end{equation}
This coefficient is found to be $\,\mu = \sum_{i=0}^{d-1} \binom{n-1}{i}$, which is the number of regions in
$\PP^{d-1}_\R \backslash \mathcal{A}$.

\begin{example}[$d=3,n=4$] \label{ex:34revisited} Let $\mathcal{A}$ be the
arrangement of four lines in Example \ref{ex:d=3n=4}.~Then
$$ A \,\,=\,\, \begin{pmatrix} 1 & 0 & 0 & 1  \\
      	      	       0 & 1 & 0 & 1 \\
                       0 & 0 & 1 & 1 \end{pmatrix}^{\!\! T}
\,\quad {\rm and}  \quad                       \,
B \,\,=\,\, \begin{pmatrix} 1 & 1 & 1 & -1  \end{pmatrix}.
$$
The likelihood ideal $I_\mathcal{A}$ in $\CC[s_1,s_2,s_3,s_4,x_1,x_2,x_3]$
is generated by the $3 \times 3$ minors of
$$
M \,\,=\,\,
\begin{pmatrix}
s_1 & s_2 & s_3 & s_4 \\
\ell_1^2 & \ell_2^2	& \ell_3^2 & \ell_4^2 \\
\ell_1 & \ell_2 & \ell_3 &\!\! -\ell_4
\end{pmatrix}.
$$
The likelihood correspondence $\mathcal{L}_\mathcal{A}$ is a threefold in $\PP^3 \times \PP^2$.
The map  onto $\PP^3$ is $7$-to-$1$.
\end{example}

We conclude with a remark on the non-generic case.
If $\mathcal{A}'$ is not SNC then the maximal minors of the
matrices (\ref{eq:likelihoodmatrix}) and (\ref{eq:likelihoodmatrix2}) 
generate the same ideal  $I$ in $\CC[s,x]$. However, that ideal is
strictly contained in the likelihood ideal $I_\mathcal{A}$.
We can compute $I_\mathcal{A}$ from $I$ by saturation
with respect to $ \ell_1 \ell_2 \cdots \ell_n$. This follows from
Proposition 2.9 and Remark 2.10 of~\cite{AandL}.

\begin{example}[The braid arrangement] \label{ex:braid2}
Let  $d=3,n=6, c=4$ and consider the model in Example~\ref{ex:braid}.
Setting $ x_1 = x ,\,x_2=y,\,x_3=z,\,x_4 = 0$ and relabeling,
the matrix (\ref{eq:likelihoodmatrix}) equals
$$ M \,\,= \,\,
\begin{pmatrix}
     \,\,s_{12} & \,s_{13}  & s_{14} & s_{23} & s_{24} & s_{34} \\
\,\,     x^2 & \,y^2 & z^2 & (x-y)^2 & (x-z)^2 & (y-z)^2 \\
     -x &\, y & 0 &\! x-y &  0  &  0 \\
     -x & \,0 & z &  0  &\! x-z &  0 \\
    \,\,  0 & \!\! -y &  z &  0  &   0 & \!y-z
      \end{pmatrix}.
$$      
The six maximal minors of $M$ generate a radical ideal $I$. It has the prime decomposition
$$ I \,\, = \,\, I_\mathcal{A}\,\cap \,
\langle x,y \rangle \,\cap \,
\langle x,z \rangle \,\cap \,
\langle y,z \rangle \,\cap \,
\langle x-y,x-z \rangle .
$$
The likelihood ideal $I_\mathcal{A}$ has three minimal generators,
of bidegrees $(1,3)$, $(1,4)$ and $(1,5)$.
\end{example}

\section{Likelihood Degenerations}
\label{sec5}

In this section, we examine the likelihood correspondence of squared linear models over
degenerate data points $s$. In particular, we determine
tropical limits of the critical points of~(\ref{eq:loglikelihoodfn}).
The framework of tropical likelihood degenerations was developed 
for linear models in \cite{ABFKST, AEP}
and for toric models in~\cite{BDH}. 
It allows one to study maximum likelihood estimation as model parameters or data points approach special values, particularly model zeros.
We extend these recent advances on the tropical geometry of likelihood inference.

The following formulation of our MLE problem will be used in this section.
Our model is specified by two matrices $A \in \R^{n \times d}$ and $B \in \R^{(n-d) \times n}$
such that the image of $A$ equals the kernel of $B$.
We shall assume that the pair $(A,B)$ is generic, in a sense to be made precise.

For our computations, we use the $n$ unknowns
$ y_i = \sqrt{p_i} = \ell_i(x) $, $\,i=1,2,\ldots,n$.
Thus we work in  projective space $\PP_\R^{n-1}$ with 
coordinates $y = (y_1:y_2:\cdots:y_n)$.
The hyperplane arrangement $\mathcal{A}$ is 
the restriction of the coordinate hyperplanes in $\PP_\R^{n-1}$ to ${\rm im}(A) \simeq \PP_\R^{d-1}$.

According to Theorem \ref{thm:likely}, our task is to  solve the system of polynomial equations
\begin{equation}
\label{eq:oursystem}
\quad  B \cdot y \, = \, 0 \,\quad \,{\rm and} \quad\, {\rm rank} \begin{pmatrix}
s_1 \quad s_2  \quad \cdots \quad s_n \smallskip \\
y_1^2  \quad y_2^2 \quad \cdots \quad y_n^2 \smallskip \\
 B \cdot {\rm diag}(y_1,\ldots,y_n)
\end{pmatrix} \,\leq\, n-d+1. 
\end{equation}
 For generic data $s = (s_1,\ldots,s_n) \in \PP^{n-1}_\R$, the number of solutions to 
 (\ref{eq:oursystem}) is $\mu = \sum_{i=0}^{d-1} \binom{n-1}{i}$.
 
We set $[n] = \{1,2,\ldots,n\}$ and we write 
$e_i$ for the $i$th standard basis vector in $\R^n$.
In what follows next, we specialize $s=e_i$. This means that $s_i = 1$ and $s_j=0$ for $j \in [n] \backslash \{i\}$.

\begin{theorem} \label{thm:happydegeneration}
Suppose that $(A,B)$ is generic and set $s=e_i$.
Then (\ref{eq:oursystem}) defines a radical ideal in $\R[y_1,\ldots,y_n]$,
which has $\mu$ zeros $y$ in $\PP^{n-1}_\R$.
The supports of the $\mu$ zeros are distinct.
Namely, $J = \{j \in [n]: y_j = 0\}$ ranges
over all subsets of
cardinality at most $d-1$ in $[n] \backslash \{i\}$.
\end{theorem}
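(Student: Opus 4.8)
The plan is to describe the fiber of the projection $\mathcal{L}_{\mathcal A}\to\PP^{n-1}$ over the point $[e_i]$ directly, using the determinantal description of $\mathcal{L}_{\mathcal A}$ from Theorem~\ref{thm:likely}: the solutions of \eqref{eq:oursystem} with $s=e_i$ are exactly the points of this fiber. Write $M$ for the matrix in \eqref{eq:oursystem} with $s=e_i$, and for a candidate $y$ put $J=\{j\in[n]:y_j=0\}$. Two preliminary remarks. Since $y\in\mathrm{im}(A)$ and $A$ is generic, no $d$ coordinates of $y$ can vanish at once, so $|J|\le d-1$ automatically. And for any $J$ the columns of $M$ indexed by $J\setminus\{i\}$ are identically zero, so deleting them and rescaling each remaining column of index $k\notin J$ by $1/y_k$ does not change $\mathrm{rank}(M)$; writing $\hat B$ for the submatrix of $B$ on the columns $[n]\setminus J$ and $\hat y=(y_k)_{k\notin J}$, genericity of $B$ gives $\mathrm{rank}(\hat B)=n-d$, while $\hat B\hat y=By=0$ with $\hat y\ne 0$ shows that $\begin{pmatrix}\hat y\\ \hat B\end{pmatrix}$ has rank $n-d+1$.

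The core is a rank count on this reduced matrix. First I would rule out $y_i=0$: if $i\in J$, the $i$-th column of $M$ survives the reduction and equals the first standard basis vector of $\R^{n-d+2}$, so $\mathrm{rank}(M)=1+\mathrm{rank}\begin{pmatrix}\hat y\\ \hat B\end{pmatrix}=n-d+2$, contradicting the bound $n-d+1$; hence $i\notin J$ for every solution, and $J\subseteq[n]\setminus\{i\}$. Now, for $i\notin J$, the reduced matrix is $\begin{pmatrix}(e_i)|_{[n]\setminus J}\\ \hat y\\ \hat B\end{pmatrix}$, and the rank bound says the first row lies in the row span $\mathrm{span}(\hat y)\oplus\ker(\hat B)^{\perp}$ of the other two blocks; projecting orthogonally onto $\ker(\hat B)$ then forces $\hat y$ to be a scalar multiple of the projection of $e_i$ onto $\ker(\hat B)$, and conversely that vector satisfies the reduced system. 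By genericity of $(A,B)$ this projection has all coordinates nonzero (finitely many open conditions, one per $J$), so filling in zeros on $J$ produces a solution $y^{(J)}$ with zero-set exactly $J$, and it is the unique solution with that zero-set. Therefore the solution set of \eqref{eq:oursystem} at $s=e_i$ is $\{\,y^{(J)}:J\subseteq[n]\setminus\{i\},\ |J|\le d-1\,\}$: these are real, have pairwise distinct supports, and number $\sum_{k=0}^{d-1}\binom{n-1}{k}=\mu$.

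It remains to show \eqref{eq:oursystem} generates a radical ideal. The likelihood correspondence $\mathcal{L}_{\mathcal A}$ is a determinantal variety of the expected codimension $d-1$, hence Cohen--Macaulay; since the fiber over $[e_i]$ is finite by the count above, the proper map $\mathcal{L}_{\mathcal A}\to\PP^{n-1}$ is finite near $[e_i]$, and miracle flatness over the regular base $\PP^{n-1}$ makes it flat there, so that fiber has length equal to the map's degree $\mu$. Being supported on $\mu$ distinct points and having length $\mu$, it is reduced, which is exactly the asserted radicality. I expect the bulk of the work to be the rank count in the middle paragraph---keeping track of the strata $\{y_j=0:j\in J\}$ and recognizing the $J$-stratum solution as the projection of $e_i$ attached to the smaller arrangement $\Lambda_J=\bigcap_{j\in J}V(\ell_j)$. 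The conceptual point to watch is that one must work with the saturated likelihood ideal, i.e.\ the rank condition, rather than the naive critical equation $\tfrac{2}{\ell_i}A_i=\tfrac1q\nabla q$: at the degenerate datum $s=e_i$ the terms $s_j\log\ell_j$ cannot be discarded near $V(\ell_j)$ even though $s_j=0$, which is precisely why the $\mu$ limiting critical points fan out over the strata $\Lambda_J$ instead of collapsing onto the single solution $y^{(\emptyset)}$ of the naive equation.
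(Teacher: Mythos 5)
Your fiber analysis is correct and, up to reorganization, is the same computation as the paper's: the paper first decomposes the determinantal ideal $I_{n-d+1}(B^{(i)}Y^{(i)})$ into linear components indexed by the flats $J$ (equation (\ref{eq:primedecomposition})) and then intersects with $\{By=0\}$, while you fix the support complement $J$ first and run the rank count on the reduced matrix; both arguments use exactly the same genericity input (the candidate solution for each $J$ has all coordinates nonzero), and your identification of the solution as the projection of $e_i$ onto $\ker(\hat B)$ matches the paper's explicit formula. Where you genuinely diverge is the scheme structure: the paper shows the ideal in (\ref{eq:oursystem}) is Cohen--Macaulay of Krull dimension one (determinantal ideal of expected codimension, then the entries of $By$ as a regular sequence) and deduces radicality from unmixedness plus reducedness at the $\mu$ points, whereas you argue globally via Cohen--Macaulayness of $\mathcal{L}_\mathcal{A}$, miracle flatness over the regular base, and a degree count.

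The gap is in your last step. Your flatness argument shows that the scheme-theoretic fiber of $\mathcal{L}_\mathcal{A}\to\PP^{n-1}$ over $e_i$ is a reduced length-$\mu$ subscheme of $\PP^{n-1}$. That is the statement that the \emph{saturation} of the ideal generated by (\ref{eq:oursystem}) is radical; it is not yet the theorem's claim that the ideal itself is radical in $\R[y_1,\dots,y_n]$. Since the ideal has dimension one (the cone over the $\mu$ points), its only possible embedded prime is the irrelevant maximal ideal, and nothing in your argument rules out such a component (compare $\langle x^2,xy\rangle$, which defines a reduced point of $\PP^1$ but is not radical). This is exactly what the paper's Cohen--Macaulay bookkeeping is for: unmixedness excludes embedded primes, and then multiplicity one at each point gives radicality. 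Your argument can be repaired with the same tool you already invoke for the total space: modulo the linear forms $By$ (i.e.\ in $\R[y]/\langle By\rangle\cong\R[x]$), the specialized system is the ideal of maximal minors of an $(n-d+2)\times n$ matrix whose zero locus is finite, so it attains the expected codimension $d-1$ and is Cohen--Macaulay by \cite[Theorem 18.18]{Eis}; hence it is unmixed, and together with the length-one conclusion at each of the $\mu$ points this yields radicality of the ideal in (\ref{eq:oursystem}). Without that addition, the final sentence ``it is reduced, which is exactly the asserted radicality'' overstates what has been proved.
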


\begin{proof}
  Let $\mathcal{A}^{(i)}$ be the affine arrangement of $n-1$ hyperplanes
  obtained by setting $y_i=1$.
  Let $b_j$ denote the $j$th column of the matrix $B$.
  We consider the $(n-d+1) \times (n-1)$ matrix
  $$ B^{(i)} \,\,= \,\,\begin{pmatrix}
y_1 & \cdots & y_{i-1} & y_{i+1} & \cdots & y_n \\
b_1 & \cdots & b_{i-1} & b_{i+1} & \cdots & b_n 
\end{pmatrix}.
  $$
  We also introduce the  diagonal matrix
   $Y^{(i)} = {\rm diag}(y_1, \ldots,y_{i-1},y_{i+1},  \ldots, y_n)$.
   Since $s=e_i$, the    maximal minors of the matrix in
  (\ref{eq:oursystem}) are precisely the maximal minors of $B^{(i)} Y^{(i)}$.
  
  After relabeling   we may assume $i = n$.
  Each maximal minor of $B^{(n)}Y^{(n)}$
    factors as $\,\det(B^{(n)}_C) \,y_{c_1} y_{c_2} \cdots\, y_{c_{n-d+1}}\,$ where $B^{(n)}_C$ is the submatrix of $B^{(n)}$ 
    with column indices $C = \{c_1, \ldots, c_{n-d+1}\}\subseteq [n-1]$. 
    We claim that the ideal of maximal minors $I_{n-d+1}(B^{(n)} Y^{(n)})$ is radical and that its prime decomposition is given by
\begin{equation}
\label{eq:primedecomposition}
   I_{n-d+1}(B^{(n)} Y^{(n)}) \,\,= \,\,\bigcap_{J} \,\biggl[\, \langle y_j \colon j \in J \rangle 
   \,+\, I_{n-d+1}( B^{(n)}_{[n-1] \setminus J}) \, \biggr].
\end{equation}
  Here, the intersection is taken over all flats $J \subseteq [n-1]$ of the affine arrangement $\mathcal A^{(n)}$. 
  Since $B$ is generic, there are $\mu$ flats,
    given by the subsets $J $ of cardinality $\leq d-1$.
  The factorization of minors above shows that the radical of the ideal on the left in  (\ref{eq:primedecomposition})
  equals the intersection on the right. In particular,
  the ideal $I_{n-d+1}(B^{(n)} Y^{(n)})$
  in (\ref{eq:primedecomposition})
   has the expected codimension $d-1$. Since it is a determinantal ideal, this implies it is Cohen--Macaulay \cite[Theorem 18.18]{Eis} of degree $\mu$. Therefore, both ideals in \eqref{eq:primedecomposition} are Cohen--Macaulay of the same degree and agree up to radical, hence they are equal.

  We now impose the $n-d$ linear equations $B \cdot y = 0$ on each of the 
  $\mu$ linear spaces of codimension $d-1$  on the right in (\ref{eq:primedecomposition}).
  Since $B$ is generic,
  this  has a unique solution $y \in \PP^{n-1}$, and this solution satisfies $y_j \not=0$ for $j \not\in J$.
  More precisely, the coordinates are
  $$ y_n = 1, \,\,\, \hbox{$y_j = 0$ \ for \ $j \in J$}, \quad
  {\rm and} \quad y_{[n-1] \setminus J} \,=\, - B_{[n-1]\setminus J}^T
 \, (B_{[n-1]\setminus J}B_{[n-1]\setminus J}^T)^{-1} \,b_n. $$
  For generic $B$, $y_{[n-1] \setminus J}$ has non-zero coordinates,
  and    $\,B \cdot y = B_{[n-1] \setminus J} \cdot y_{[n-1] \setminus J} + b_n = 0. $

We have shown that the linear space $\{y \in \PP^{n-1}: B \cdot y = 0\}$
intersects the $\mu$ irreducible components in distinct reduced points. Therefore,
the  entries of $B \cdot y $ form a regular sequence modulo $I_{n-d+1}(B^{(n)}Y^{(n)})$.
    Adding this regular sequence yields a Cohen--Macaulay ideal of 
  Krull dimension one in $\R[y_1,\ldots,y_n]$. This ideal is radical, and it equals the ideal
  in (\ref{eq:oursystem}).
  \end{proof}
  
\begin{example}[$d=2,n=4$] \label{ex:nongeneric-uniform}  The rank $2$ matroid on $[4]$ is uniform for the model given by
$$ A^T \, = \,
\begin{pmatrix}
1 & 1 & 1 & 0 \\
0 & 1 & 2 & 1 
\end{pmatrix} \quad {\rm and} \quad
B \,= \,
\begin{pmatrix}
 1 & -2 & 1 & 0 \\
 1 & -1 & 0 & 1 
\end{pmatrix}.
$$
Theorem \ref{thm:happydegeneration} concerns
 the special data  $s = (1,0,0,0)$. The relevant minor of $M$ in (\ref{eq:likelihoodmatrix})~is
$$ \det(B^{(1)} Y^{(1)})\,\, =\,\, \det \begin{pmatrix}
        y_2^2 & y_3^2 & y_4^2 \, \,\\
          -2 y_2 & y_3 &  0  \, \,\\
       -y_2 & 0 &   y_4 \, \,  \end{pmatrix} .$$
The radical ideal $\,I_3( B^{(1)} Y^{(1)}) \,=\, \langle y_2 \rangle \,\cap \, 
  \langle y_3 \rangle \,\cap \, \langle y_4 \rangle \,\cap \,     
  \langle y_2+2y_3+y_4  \rangle\,$ gives $\mu = 4$ planes in~$\PP^3$, but two of these planes meet the line
   $\{y \in \PP^3: B \cdot y = 0\}$ in the same point. So the
   model is not generic in the sense of Theorem \ref{thm:happydegeneration}.
   The likelihood ideal at $s=e_1$ is not~radical:
     $$ \! I_3( B^{(1)}Y^{(1)}) + \langle B \cdot y \rangle =
\langle y_2,\, y_1+y_4,\,y_3-y_4 \rangle \,\cap\,
\langle y_3^2,\, y_1-y_3+2 y_4,\,y_2-y_3+y_4 \rangle \,\cap\,
\langle y_4,\,y_1-y_3,\,y_2-y_3 \rangle.
$$
One checks that this ideal becomes radical for any small perturbation of the model $(A,B)$.
\end{example}

We now discuss tropical MLE.
By Theorem \ref{thm:two}, all  solutions to (\ref{eq:oursystem}) are real, and there is one solution in each region of $\PP^{d-1}_{\R} \backslash \mathcal{A}$.
This statement remains valid for any real closed field, such as the
field $\,R = \R\{\!\{ \epsilon \} \! \}\,$ of real Puiseux series.
If our data vector $s$ has coordinates in $R$, the system (\ref{eq:oursystem}) has $\mu$ solutions with coordinates in $R$. 
The coordinatewise valuation $w = {\rm val}(s) \in \QQ^n$ of $s$ is the \emph{tropical data vector}.
For any solution $y$ of  (\ref{eq:oursystem}) in $\PP^{n-1}_R$,
we also record its valuation
 $z = {\rm val}(y) \in \QQ^n/\QQ {\bf 1}$.
We present a formula that writes $z$ in terms of $w$.

\begin{corollary} \label{cor:tropgeneric}
Fix a generic model $(A,B)$ and $i \in [n]$. Given data $s \in R^n$, where
$w = {\rm val}(s)$ satisfies $\,w_i < w_j\,$ for $\,j \in [n] \backslash \{i\}$,
the $\mu$ tropical critical points $z$ are distinct. They are
\begin{equation} \label{eq:predicted}
 z \,\,=\,\,  \sum_{j \in J} \,(w_j - w_i)\, e_j   , \end{equation}
where $J$ runs over the $\mu$ subsets of cardinality at most $d-1$ in $[n] \backslash \{i\}$.
\end{corollary}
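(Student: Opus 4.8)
The plan is to derive Corollary~\ref{cor:tropgeneric} as a ``tropical shadow'' of Theorem~\ref{thm:happydegeneration}, using the standard principle that, over the Puiseux series field $R$, the valuation of a solution that depends analytically on a parameter is obtained by tracking the leading-order behavior. Concretely, I would write $s_j = \epsilon^{w_j} s_j^{(0)}(\epsilon)$ with $s_j^{(0)}(0) \neq 0$, and rescale by $\epsilon^{-w_i}$ so that $s_i$ has valuation $0$ while every other $s_j$ has strictly positive valuation $w_j - w_i > 0$. Thus, as $\epsilon \to 0$, the data vector $s$ degenerates to $e_i$. Theorem~\ref{thm:happydegeneration} tells us that at $s = e_i$ the system~(\ref{eq:oursystem}) is radical with $\mu$ distinct solutions, indexed by the subsets $J \subseteq [n] \backslash \{i\}$ with $|J| \le d-1$, and that the $J$-th solution has exactly the coordinates $y_j = 0$ for $j \in J$ and $y_j \neq 0$ otherwise.

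Next I would invoke the fact that radicality at the special fiber $s = e_i$ forces the $\mu$ solution branches over $R$ to be unramified there: each of the $\mu$ solutions $y(\epsilon) \in \PP^{n-1}_R$ specializes, at $\epsilon = 0$, to a distinct one of the $\mu$ points of Theorem~\ref{thm:happydegeneration}, and does so with multiplicity one. For the branch attached to the subset $J$, the coordinates $y_j$ with $j \notin J$ have nonzero limit, hence valuation $0$ (after our normalization); the content is to pin down the valuation of $y_j$ for $j \in J$. Here I would use the explicit linear-algebra description of the solution: on the component indexed by $J$, the equations are the linear system $B \cdot y = 0$ together with $y_j = 0$ for $j \in J$ in the limit, but for $\epsilon \neq 0$ the vanishing conditions $y_j = 0$ are replaced by the rank condition, which, after the factorization of minors in the proof of Theorem~\ref{thm:happydegeneration}, amounts to requiring that the relevant submatrix minors vanish. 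Tracking the lowest-order terms in $\epsilon$ of the rank-deficiency equations — essentially the single constraint coming from the first two rows $(s, y^2)$ of the matrix in~(\ref{eq:oursystem}) interacting with the $B$-rows — yields $\mathrm{val}(y_j) = w_j - w_i$ for $j \in J$. This gives $z = \mathrm{val}(y) = \sum_{j \in J}(w_j - w_i)e_j$ in $\QQ^n/\QQ\mathbf{1}$, and the $\mu$ vectors are distinct because the subsets $J$ are distinct and the coefficients $w_j - w_i$ are strictly positive.

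I would organize the argument in three steps: (i) set up the Puiseux normalization and observe $s \to e_i$; (ii) use radicality from Theorem~\ref{thm:happydegeneration} to get $\mu$ unramified branches with the stated specializations, so the ``$y_j = 0$'' coordinates are exactly $\{y_j : j \in J\}$; (iii) compute the leading exponent of $y_j$ for $j \in J$ by a first-order perturbation of the minor equations in~(\ref{eq:oursystem}), obtaining the exponent $w_j - w_i$. The main obstacle is step~(iii): showing that the perturbation is \emph{non-degenerate}, i.e.\ that the leading term genuinely has order $\epsilon^{w_j - w_i}$ and not higher, requires that a certain sub-determinant of $B$ (the Jacobian of the limiting system transverse to the $J$-component) be nonzero, which is exactly where genericity of $(A,B)$ enters. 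I would dispatch this by reusing the explicit formula $y_{[n-1]\setminus J} = -B_{[n-1]\setminus J}^T(B_{[n-1]\setminus J}B_{[n-1]\setminus J}^T)^{-1} b_n$ from the proof of Theorem~\ref{thm:happydegeneration}: substituting back into the single remaining scalar equation (the one involving $s$) and reading off its $\epsilon$-expansion shows the balance $\mathrm{val}(y_j^2 s_i) = \mathrm{val}(s_j y_\ast^2)$ for the ``active'' coordinate, which forces $2\,\mathrm{val}(y_j) = w_j - w_i + 0$, i.e.\ the claimed value up to the square; a careful bookkeeping of which coordinate is squared resolves the factor of $2$. Once step~(iii) is in hand, distinctness and the count $\mu$ are immediate from Theorem~\ref{thm:happydegeneration}.
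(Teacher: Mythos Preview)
Your steps (i) and (ii) match the paper's own argument: after normalizing $w_i=0$, one lets $s$ degenerate to $e_i$ and invokes Theorem~\ref{thm:happydegeneration} to see that the $\mu$ branches are unramified and indexed by the subsets $J \subseteq [n]\setminus\{i\}$ of size at most $d-1$, with $z_j > 0$ exactly for $j \in J$.

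The gap is in step~(iii). The balance you propose, $\mathrm{val}(y_j^2\, s_i) = \mathrm{val}(s_j\, y_\ast^2)$, is what one reads off from a $2 \times 2$ minor of the first two rows of~(\ref{eq:oursystem}), but that is \emph{not} one of the vanishing conditions: the rank constraint involves the $(n{-}d{+}2)\times(n{-}d{+}2)$ minors. Your balance yields $z_j = (w_j - w_i)/2$, off by a factor of two, and this is not a bookkeeping issue about $y$ versus $p=y^2$; the corollary is stated for $z = \mathrm{val}(y)$ and the correct value really is $w_j - w_i$. What goes wrong is that in a full maximal minor the dominant terms are not $s_i y_j^2$ versus $s_j y_i^2$: there are terms $s_i y_\ell^2\cdot(\text{cofactor containing }y_j)$, for $\ell\notin\{i,j\}$, whose valuation is $z_j$, strictly smaller than $2z_j$. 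The paper avoids perturbation expansions altogether. For each $j \in J$ it takes the column set $C = \{i,j\}\cup D$ with $D$ an $(n{-}d)$-subset of $[n]\setminus(J\cup\{i\})$ and uses that the corresponding minor of~(\ref{eq:likelihoodmatrix}) factors as $\prod_{c\in C} y_c$ times $\sum_{k\neq \ell\in C} \pm (s_k/y_k)\,y_\ell\,\det(B_{C\setminus\{k,\ell\}})$. After dividing out the monomial prefactor, each term has tropical weight $w_k - z_k + z_\ell$, \emph{linear} in $z$; since $z_c = 0$ for all $c \in C\setminus\{j\}$ and $w_k > 0$ for $k\neq i$, the only candidates for the minimum are $w_i=0$ (from $k=i$) and $w_j - z_j$ (from $k=j$), and tropical vanishing forces $z_j = w_j$. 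The $B$-rows are essential here: their contribution of the factor $\prod_c y_c$ is exactly what converts the quadratic $y_\ell^2$ from row two into a linear $y_\ell$ in the reduced sum, dissolving your factor of two.
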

 
 Before we come to the proof, we go over a detailed example that explains the assertion.
 
 \begin{example}[$d=3,n=4$] \label{ex:sevenarcs}
  Let $B = \begin{pmatrix} 1 & 1 & 1 & \! -1 \end{pmatrix}$.
This is the Steiner surface model in Examples \ref{ex:d=3n=4}
and \ref{ex:34revisited}. Its ML degree is $\mu=7$.
We examine  Theorem \ref{thm:happydegeneration} for  $i=1$, so the data vector is $s = (1,0,0,0)$.
The ideal in (\ref{eq:oursystem}) is radical.
Its seven solutions in $\PP^3_\R$ are
$$ \begin{matrix} y(0) &  = &
(1:0:0:1),\,
(1:0:-1:0),\,
(1:-1:0:0) ,
\qquad \qquad \qquad \qquad \qquad \qquad\,  \\ & & 
(2:0:-1:1),\,
(2:-1:0:1), \,
(2:-1:-1:0), \, (3:-1:-1:1).
\end{matrix}
$$

We now introduce a small parameter $\epsilon > 0$, and we fix the
data vector $s = \bigl(1,\epsilon^3, \epsilon^4,\epsilon^5 \bigr)$.
Each solutions  $y(\epsilon) \in \PP^3_R$ 
converges to one of the $y(0)$ above. The seven solutions $y(\epsilon)$ are
$$ \begin{matrix}
\bigl( \,1\,:\,
2\epsilon^3 + 6\epsilon^6{-}2\epsilon^7{+}2\epsilon^8 \,:\,
2\epsilon^4 - 2\epsilon^7{-}6\epsilon^8{+}2\epsilon^9 \,:\,
 1 +2\epsilon^3{+}2\epsilon^4{-}6\epsilon^6{-}4\epsilon^7 \,\bigr),
 \smallskip \\
\bigl(\,1\,:\,
2\epsilon^3 - 6\epsilon^6{+}2\epsilon^7{-}2\epsilon^8\,:\,
-1 - 2\epsilon^3{-}2\epsilon^5{+}6\epsilon^6{-}2\epsilon^7\,:\,
-2\epsilon^5 +2\epsilon^8{-}2\epsilon^9{+}6\epsilon^{10}\,\bigr),
\smallskip \\
\bigl(\,1\,:\,
 -1-2\epsilon^4{-}2\epsilon^5{-}2\epsilon^7{+}4\epsilon^8\,:\,
  2\epsilon^4+2\epsilon^7{-}6\epsilon^8{-}2\epsilon^9 \,:\,
   -2\epsilon^5-2\epsilon^8{+}2\epsilon^9{+}6\epsilon^{10} \,\bigr),
\smallskip \\
\bigl( 2\,:\,
6\epsilon^3-48\epsilon^6{+}12\epsilon^7{+}12\epsilon^8{+}816 \epsilon^9\,:\,
{-}1{-}3\epsilon^3{-}3\epsilon^4{+}3\epsilon^5{+}24\epsilon^6\,:\,
1{+}3\epsilon^3{-}3\epsilon^4{+}3\epsilon^5{-}24\epsilon^6\,\bigr),
\smallskip \\
\bigl(\,2\,:\, -1-3\epsilon^3{-}3\epsilon^4{+}3\epsilon^5{+}12\epsilon^6 \,:\,
6\epsilon^4+12\epsilon^7{-}48\epsilon^8{+}12\epsilon^9 {-}12 \epsilon^{10}\,:\,
1-3\epsilon^3{+}3\epsilon^4{+}3\epsilon^5{+}12\epsilon^6 \,\bigr),
\smallskip \\
\bigl(\,2\,:\,
- 1 - 3\epsilon^3{+}3\epsilon^4{-}3\epsilon^5{+}12\epsilon^6\,:\,
- 1+ 3\epsilon^3{-}3\epsilon^4{-}3\epsilon^5{-}12\epsilon^6\,:\,
- 6\epsilon^5-12\epsilon^8{-}12\epsilon^9{+}48\epsilon^{10}\,\bigr),
\smallskip \\
\bigl(\,3\,:\,
 -1-8\epsilon^3{+}4\epsilon^4{+}4\epsilon^5{+}72\epsilon^6\,:\,
  -1+4\epsilon^3{-}8\epsilon^4{+}4\epsilon^5{-}36\epsilon^6\,:\,
   1-4\epsilon^3{-}4\epsilon^4{+}8\epsilon^5{+}36\epsilon^6\,\bigr).
   \end{matrix}
$$ 
Each coordinate is a convergent power series in $\ZZ[\![ \epsilon ] \! ]$.
We computed these with the command {\tt puiseux} in {\tt Maple}. 
Passing to valuations, the tropical data vector is $w = {\rm val}(s) =  (0,3,4,5)$.

The tropical MLE is given by the valuations of the seven solutions above. We see that
$$ \begin{matrix} z \,=\, {\rm val}(y(\epsilon)) \,= \,
(0:3:4:0), \,(0:3:0:5),\, (0:0:4:5) , \qquad \qquad \qquad \quad \\ \qquad \qquad
\qquad \qquad (0:3:0:0),\, (0:0:4:0),\, (0:0:0:5),\, (0:0:0:0) . \end{matrix} $$
These are the $\mu=7$ tropical critical points. These were written in terms of $w$ in (\ref{eq:predicted}).
 \end{example}
 
 \begin{proof}[Proof of Corollary \ref{cor:tropgeneric}]
We may assume  $s_i = 1$ and $w_i = 0$.
Then $s_1,\ldots,s_{i-1},s_{i+1},\ldots,s_n$ have positive valuations.
We write $w= \sum_{j\in[n]} w_j e_j = \sum_{j \in [n]} {\rm val}(s_j) e_j$ for the tropical data.
The system
(\ref{eq:oursystem}) has $\mu$ distinct solutions $y(\epsilon)$ in $\PP^{n-1}_R$.
Their special fibers $y(0) \in \PP^{n-1}_\R$ are the zeros of
the prime components in (\ref{eq:primedecomposition}).
Hence there are $\mu$ distinct tropical critical points.

Suppose that $y(\epsilon)$ is a tropical critical point with valuation $z = {\rm val}(y(\epsilon))$ and let $J = \{j \in [n] \mid z_j > 0\}$. 
Because $J = \{j \in [n] \mid (y(0))_j = 0\}$,
Theorem~\ref{thm:happydegeneration}  implies that $|J| \leq d - 1$.
By the definition of $J$, $z_j = 0$ if $j \notin J$.
Our claim states that $z_j = w_j$ if $j \in J$. 

Fix $j \in J$.
To prove $z_j = w_j$, we tropicalize a determinantal equation in (\ref{eq:oursystem}).
Let $C$ be the union of $\{i,j\}$ with an $(n - d)$-subset of $[n]\backslash(J \cup \{i\})$.
The minor of $M$ indexed by $C$ is 
$$ \prod_{c \in C}y_c \,\cdot \sum_{k \neq \ell \in C} \! \pm (s_k/y_k) \,y_\ell \,\det(B_{C \backslash \{k, \ell\}}).$$
The coordinates $y_c$ of $y$ are nonzero for $c \in C$, so the sum vanishes. 
After tropicalization, the vanishing of the sum becomes the condition that the minimum $\min_{k \neq \ell \in C} (w_k - z_k + z_\ell)$ is attained twice.
Since $z_k = 0$ for $k \in C \backslash \{j\}$,  we have
\begin{align*}
  w_k - z_k + z_\ell \,\,= \,\,
  \begin{cases}
  \,  w_k       &\quad \text{if $\,k,\ell \neq j$,}\\
  \,  w_j - z_j &\quad \text{if $\,k = j$,}\\
  \,  w_k + z_j &\quad \text{if $\,\ell = j$.}
  \end{cases}
\end{align*}
Since $z_j > 0$, the minimum is never attained in the last case.
By assumption, $w_k > w_i$. This means that
$w_k$ cannot attain the minimum unless $k = i$.
Hence the tropical equation simplifies to $\min(w_i, w_j - z_j)$.
This yields $w_j - z_j = w_i = 0$, and we conclude that $z_j = w_j$.
\end{proof}

The field $R$ of real Puiseux series 
is both valued and ordered, and it is
interesting to combine both structures
when studying the likelihood geometry of squared linear models.
In the setting of Corollary \ref{cor:tropgeneric}, 
there is one critical point $y $ in each region
of $\PP^{n-1}_R \backslash \mathcal{A}$.
The region is a polytope in $\PP^{n-1}_\R$, and
$y = y(\epsilon)$  is an arc whose limit $y(0)$ lies
on one of the faces of that polytope.
All arcs are repelled by the hyperplane  $\{y_i = 0\}$ at infinity,
so they converge to a face of the affine arrangement $\mathcal{A}^{(i)}$.
Note that $\mathcal{A}^{(i)}$ has $\mu$ flats,
while $\mathcal{A}$ has $\mu$ regions.
The $\mu$ critical arcs $y = y(\epsilon)$ therefore
specify a bijection between the flats and the regions.

\begin{figure}[h]
  \centering
  \begin{tikzpicture}[scale=0.9]
    \draw[thick] (-5,0) -- (5,0);
    \node at (5.25,0) {$y_2$};
    \draw[thick] (0,-2) -- (0,4);
    \node at (0, 4.25) {$y_3$};    
    \draw[thick] (4,-2) -- (-2,4);
    \node at (-2, 4.25) {$y_4$};        
    \draw[thick] (-5,-2) -- (-7,4);
    \node at (-7, 4.25) {$y_1$};            
    \node at (0.85,0.25) {$+--+$};
    \node at (2,3) {$+---$};
    \node at (5, -1) {$++--$};
    \node at (1,-1.5) {$++-+$};
    \node at (-2,-1) {$++++$};
    \node at (-2,1.5) {$+-++$};
    \node at (-1,4) {$+-+-$};

    \draw[blue, <-, thick,domain=0.2:0.6,samples=100,variable=\t]
    plot ( {-4*\t^4 + 4*\t^7 + 12*\t^8 - 4*\t^9}, {-4*\t^3 - 12*\t^6 + 4 * \t^7 - 4*\t^8});

    \draw[blue, <-, thick,domain=0.2:0.6,samples=100,variable=\t]
    plot ({2 + 4*\t^3 + 4*\t^5 - 12*\t^6 + 4*\t^7}, {-4*\t^3 + 12*\t^6 - 4*\t^7 + 4*\t^8});

    \draw[blue, thick, <-, domain=0.3:0.6,samples=100,variable=\t]
    plot ({-4*\t^4 - 4*\t^7 + 12*\t^8 + 4*\t^9}, {2 + 4*\t^4 + 4*\t^5 + 4*\t^7 - 8*\t^8});

    \draw[blue, thick, <-, thick,domain=0.2:0.4,samples=100,variable=\t]
    plot ({1 + 3*\t^3 + 3*\t^4 - 3*\t^5 - 24*\t^6}, {-6*\t^3 + 48*\t^6 - 12*\t^7 - 12*\t^8 - 816*\t^9});

    \draw[blue, <-, thick,domain=0.2:0.5,samples=100,variable=\t]
    plot ({-6*\t^3 - 12*\t^7 + 144*\t^8 - 12*\t^9 + 12*\t^(10)}, {1 + 3*\t^3 + 3*\t^4 - 3*\t^5 -12*\t^6});

    \draw[blue, <-, thick,domain=0.45:0.6,samples=100,variable=\t]
    plot ({1 - 3*\t^3 + 3*\t^4 + 3*\t^5 + 12*\t^6}, {1 + 3*\t^3 - 3*\t^4 + 3*\t^5 -12*\t^6});

    \draw[blue, <-, thick,domain=0.4:0.47,samples=100,variable=\t]
    plot ({(-2 + 8*\t^3 - 16*\t^4+8*\t^5-72*\t^6)/(-3)},{(-2-16*\t^3 + 8*\t^4 + 8*\t^5 + 166*\t^6)/(-3)});
  \end{tikzpicture}
  \caption{Tropical MLE for $d=3,n=4$ gives a bijection between the seven regions
  of $\PP^2_\R \backslash \mathcal{A}$
    and the seven faces of the triangle.      Each arc $y(\epsilon)$
    travels  from its region to the triangle. 
      }
  \label{fig:fourlines}
\end{figure}
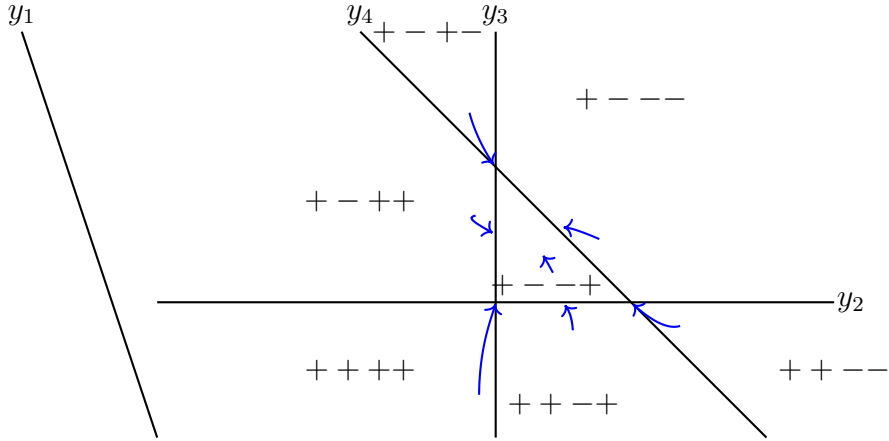

\begin{example}[$n=d+1$]
Here $\mathcal{A}$ has $\mu = 2^d-1$ regions.
Only one of them is bounded in $\mathcal{A}^{(i)}$.
The bounded region is a $(d-1)$-simplex, so it has $\mu$ faces.
Each region meets the $(d-1)$-simplex in a distinct face,
made manifest by an arc $y(\epsilon)$ from the region to that face.

We visualize the case  $d=3$ with $i=1$ in Figure \ref{fig:fourlines}.
The arrangement $\mathcal{A}$ has four lines and seven regions in $\PP^2_\R$.
The seven arcs $y(\epsilon)$ are given algebraically in
Example \ref{ex:sevenarcs}. Each limit point $y(0)$ lies in the
closed triangle: three at the vertices, three on the edges, and one in the~interior.
The seven tropical solutions
    $z$ reveal how each arc approaches its limit point.
\end{example}

\section{Log-Normal Polytopes}
\label{sec6}

The likelihood correspondence can be viewed as
the logarithmic normal bundle of the $(d-1)$-dimensional
variety $X \subset \PP^{n-1}$. This geometric perspective was emphasized  in \cite{HS}.
Its fiber over a regular point $p$ in $X_{>0}$
 is a linear space of
dimension $n-d$ inside the space $\PP^{n-1}$ of data $s$.
The  \emph{log-normal polytope} is the intersection of 
 this fiber with the simplex $\overline{\Delta}_{n-1} = \PP^{n-1}_{\geq 0}$.
Each log-normal polytope has dimension $n-d$.

The model is given by a pair $(A,B)$ as in Section \ref{sec5}.
We fix $p = (y_1^2,\ldots,y_n^2) $ in  $ X_{>0}$. Then
$y = Ax$ for some $x \in \PP^{d-1}_\R \backslash \mathcal{A}$, and we have $By = 0$.
The rank constraints in  (\ref{eq:oursystem}) 
give $d-1$ independent linear equations in
the unknowns $s = (s_1,\ldots,s_n)$. We seek the
solutions to these equations in the simplex $\overline{\Delta}_{n-1} = \PP^{n-1}_{\geq 0}$.
In symbols, the log-normal polytope at $y$ is
\begin{equation}
\label{eq:lnp}
\Pi(y) \,\, := \,\,  \bigl\{ \,s \in \R^n_{\geq 0}  \,: \, s \,\,{\rm satisfies} \,\,(\ref{eq:oursystem})\,\,
{\rm and} \,\, s_1 + s_2 + \cdots + s_n = 1   \bigl\}.
 \end{equation}
Thus, $\Pi(y)$ consists of all
 probability distributions in the row span of the $(n\!-\!d\!+\!1) \times n$~matrix
 \begin{equation}
 \label{eq:BY}
 \begin{pmatrix} 
y_1^2  \,\,\, y_2^2 \,\,\, \cdots \,\,\, y_n^2 \smallskip \\
 B \cdot Y
 \end{pmatrix} \,\,\, = \,\,\, \begin{pmatrix} y_1 \,\,\, y_2 \,\,\, \cdots \,\,\, y_n \smallskip \\ B \end{pmatrix} \cdot Y \,\,\, = \,\,\,
  \begin{pmatrix}
  1 \,\,\, 1 \,\,\, \cdots \,\,\, 1 \smallskip \\
  B \cdot Y^{-1}
       \end{pmatrix}
  \cdot Y^2.
\end{equation}
Here we abbreviate $Y = {\rm diag}(y_1,\ldots,y_n)$.
The $\binom{n}{d-1}$ maximal minors of 
(\ref{eq:BY}) factor as follows:
\begin{equation}
\label{eq:newminors} \quad
y_{j_0} y_{j_1} \cdots y_{j_{n-d}} \cdot
{\rm det}
\begin{pmatrix}
y_{j_0}  \!&\! y_{j_1} \!&\! \!\cdots \!&\! y_{j_{n-d}} \\
b_{j_0} \!&\! b_{j_1} \!&\!\! \cdots \!&\! b_{j_{n-d}} \end{pmatrix} 
\qquad {\rm for} \quad 1 \leq y_{j_0} \!<\! y_{j_1} \!<\! \cdots \!<\! y_{j_{n-d}} \leq n.
\end{equation}
These determinants define $\binom{n}{d-1}$  hyperplanes in
$\PP^{d-1} = \PP({\rm ker}(B))$, in addition to the $n$ given hyperplanes $y_i = \ell_i(x)$.
The {\em chamber arrangement} $\mathcal{A}^{\rm ch}$ consists of all
$n + \binom{n}{d-1}$ hyperplanes.

Using this enlarged arrangement, we obtain the following characterization of our polytope.

\begin{theorem}  \label{thm:chamber}
The polytope $\Pi(y)$ is combinatorially dual to the
convex hull $\,Q$ of the columns of the $(n-d) \times n$ matrix $B Y^{-1}$.
Each log-normal polytope $\Pi(y)$ 
is simple for $y \in \PP^{d-1}_\R \backslash \mathcal{A}^{\rm ch}$.
Its combinatorial type is fixed for $y$ in a 
region of the chamber arrangement $\mathcal{A}^{\rm ch}$.
\end{theorem}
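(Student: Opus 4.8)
The plan is to exhibit $\Pi(y)$ as an affine copy of the polar dual $Q^{\circ}$ of $Q$, and then to read off simplicity and constancy of the combinatorial type from the oriented matroid of the columns of $BY^{-1}$. First I would parametrize the row span appearing in (\ref{eq:BY}). Write $V = BY^{-1}$, with columns $v_i = b_i/y_i$, so that $Q = {\rm conv}(v_1,\dots,v_n)\subset\R^{n-d}$. The rightmost form of (\ref{eq:BY}) shows that every $s$ in that row span satisfies $s_i = y_i^{2}(c_0 + v_i^{\,T}c')$ for some scalar $c_0$ and some $c'\in\R^{n-d}$. Two elementary observations then finish the reduction. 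First, $V Y^{2}\mathbf{1} = B Y^{-1}Y^{2}\mathbf{1} = By = 0$, so $\sum_i y_i^{2} v_i = 0$; since $B$ has full rank $n-d$ the $v_i$ span $\R^{n-d}$, and together with this convex dependence with strictly positive coefficients the origin lies in ${\rm int}(Q)$, so $Q$ is $(n-d)$-dimensional. Second, summing the coordinates and using $\sum_i y_i^{2}v_i = 0$ gives $\sum_i s_i = c_0\sum_j y_j^{2}$, so the normalization $\sum_i s_i = 1$ forces $c_0 = 1/\sum_j y_j^{2}>0$, a positive constant. Substituting $c' = -c_0 w$ then identifies $\Pi(y)$ with the image, under the affine map $w\mapsto (y_i^{2}(c_0 - c_0 v_i^{\,T}w))_i$, of $-c_0\,Q^{\circ}$, where $Q^{\circ} = \{w : v_i^{\,T}w\le 1\ \forall i\}$. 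This map is injective because ${\rm rank}\,V = n-d$, and since $0\in{\rm int}(Q)$ the polar $Q^{\circ}$ is a polytope of dimension $n-d$ combinatorially dual to $Q$. This proves the first assertion and re-proves $\dim\Pi(y)=n-d$.

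Next I would establish simplicity for $y$ outside the chamber arrangement $\mathcal{A}^{\rm ch}$, using the factorization (\ref{eq:newminors}): for any $1\le j_0<\dots<j_{n-d}\le n$,
\[
\det\begin{pmatrix} y_{j_0} & \cdots & y_{j_{n-d}}\\ b_{j_0} & \cdots & b_{j_{n-d}}\end{pmatrix} \;=\; y_{j_0}\cdots y_{j_{n-d}}\cdot\det\begin{pmatrix} 1 & \cdots & 1\\ v_{j_0} & \cdots & v_{j_{n-d}}\end{pmatrix}.
\]
When $y\notin\mathcal{A}^{\rm ch}$, all the $y_i$ and all these determinants are nonzero, so every $(n-d+1)$-element subset of $\{v_1,\dots,v_n\}$ is affinely independent. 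A full-dimensional polytope whose vertices are in such general position is simplicial: a facet hyperplane is $(n-d-1)$-dimensional and therefore cannot contain more than $n-d$ of the $v_i$, so every facet of $Q$ is a simplex. Hence $Q$ is simplicial and $\Pi(y)\cong Q^{\circ}$ is simple.

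For the remaining claim I would invoke the standard fact that the face lattice of a polytope is determined by the oriented matroid of the vector configuration $\widehat{v}_i = (1, v_i)\in\R^{n-d+1}$ formed by its vertices. The chirotope of this configuration on an index tuple $(j_0,\dots,j_{n-d})$ is the sign of $\det\begin{pmatrix} 1 & \cdots & 1\\ v_{j_0} & \cdots & v_{j_{n-d}}\end{pmatrix}$, which by the factorization above equals
\[
\Bigl(\textstyle\prod_{k=0}^{n-d}{\rm sign}\,\ell_{j_k}(x)\Bigr)\cdot{\rm sign}\,\det\begin{pmatrix} \ell_{j_0}(x) & \cdots & \ell_{j_{n-d}}(x)\\ b_{j_0} & \cdots & b_{j_{n-d}}\end{pmatrix}.
\]
Each factor on the right is the sign of one of the $n + \binom{n}{d-1}$ linear forms in $x$ that define $\mathcal{A}^{\rm ch}$, hence is constant as $x$ ranges over a single region of $\mathcal{A}^{\rm ch}$. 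Therefore the chirotope of $(\widehat{v}_i)$, and with it the face lattices of $Q$ and of the dual polytope $\Pi(y)\cong Q^{\circ}$, are constant on each region of $\mathcal{A}^{\rm ch}$.

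I expect the main obstacle to be the first step: recognizing that the affine slice $\Pi(y)$ is literally a rescaled polar dual, which rests on the identity $\sum_i y_i^{2}v_i = 0$ and the consequent pinning of $c_0$ to a positive constant by the normalization. Once this is in place the rest is routine, combining the minor factorization (\ref{eq:newminors}) — already established in the paper — with standard polytope and oriented-matroid theory. A small point that still needs care is checking that $0$ lies in the \emph{interior} of $Q$ rather than merely its relative interior; this is exactly where the hypothesis that $B$ has full rank $n-d$ is used.
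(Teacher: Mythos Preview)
Your proposal is correct and follows essentially the same route as the paper: identify $\Pi(y)$ with the polar dual $Q^{\circ}$ via the row-span parametrization of (\ref{eq:BY}), and then read off simplicity and constancy from the oriented matroid of the configuration $\widehat{v}_i=(1,v_i)$, i.e.\ of $\tilde{B}=\begin{pmatrix}\mathbf{1}\\ BY^{-1}\end{pmatrix}$. Your write-up is in fact more self-contained than the paper's---you explicitly verify $0\in\mathrm{int}(Q)$ via $\sum_i y_i^{2}v_i=0$ and compute the chirotope signs in terms of the defining linear forms of $\mathcal{A}^{\rm ch}$, whereas the paper cites \cite{Ale} and \cite{BLSWZ} for these steps; the only slip is that after substituting $c'=-c_0w$ the domain of the map $w\mapsto s$ is $Q^{\circ}$ itself rather than $-c_0\,Q^{\circ}$, a typo-level issue that does not affect the argument.
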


\begin{proof}
Let $Q^\Delta$ denote the polar dual of $Q$.
We prove that $Q^\Delta$ is combinatorially equivalent to $\Pi(y)$.
Since $y_i^2 > 0$ for all $i$, removing the factor $Y^2$ from (\ref{eq:BY}) does not change the combinatorial type of the polytope
given by its columns.
Setting $\tilde{B} = \begin{pmatrix} \bf 1\\ BY^{-1} \end{pmatrix}$, we~have
\begin{align*}
    Q^\Delta 
    &\,=\, \{z \in \R^{n-d} \colon z^TBY^{-1} \leq {\bf 1} \} \,
     \,\simeq \,\{z \in \R^{n-d+1} \colon z_1 = 1,\,\,z^T \tilde{B} \geq 0 \} ,\qquad {\rm and}
    \\
    \Pi(y) &\,=\, \{z^T \tilde{B} \in \R^{n} \colon z^T\tilde{B} \geq 0,  \sum_{i=1}^n (z^T\tilde{B})_i = 1 \}
     \simeq
    \{z \in \R^{n - d+1} \colon z^T\tilde{B} \geq 0, \sum_{i=1}^n (z^T\tilde{B})_i = 1 \}.
\end{align*}
This shows that the cones over $Q^\Delta$ and $\Pi(y)$ are equal. 
We then apply the argument in the proof of \cite[Theorem 4]{Ale} to conclude that the polytopes are combinatorially equivalent. 

The combinatorial type of $Q$ is determined by the rank $n - d + 1$ oriented matroid of $\tilde B$; see \cite[Section 9.1]{BLSWZ}. 
When $y$ is in $\PP_\R^{d-1} \backslash\mathcal{A}^{\rm ch}$, the minors of $\tilde B$ are nonzero and hence the underlying matroid of the oriented matroid is uniform; hence $Q$ is simplicial.
Thus the polar dual $Q^\Delta$ and $\Pi(y)$ are both simple.  
For $y$ in a fixed region of $\PP_\R^{d-1} \backslash\mathcal{A}^{\rm ch}$, the oriented matroid is fixed, 
since changing the matroid  requires crossing one of the hyperplanes.
\end{proof}

\begin{example}[$d=2,n=6$] We consider the $1$-dimensional model $X$ in $\PP^5$ defined by
$$ A \,\, = \,\, \begin{pmatrix} 1 & 1 & 1 & 1 & 1  & 1 \\ 1 & 2 & 3 & 4 & 5 & 6 \end{pmatrix}^{\! T}
\quad {\rm and} \quad
B \,\, = \,\, \begin{small}
\begin{pmatrix}
\,1 & -2 & \phantom{-}1 & \phantom{-}0 & \phantom{-}0 & \phantom{-}0 \,\,\\
\,0 & \phantom{-}1 & -2 & \phantom{-}1 & \phantom{-}0 & \phantom{-}0 \,\,\\
\,0 & \phantom{-}0 & \phantom{-}1 & -2 & \phantom{-}1  & \phantom{-}0 \,\,\\
\,0 & \phantom{-}0 & \phantom{-}0 & \phantom{-}1 & -2 & \phantom{-}1 \,\,
\end{pmatrix}. \end{small}
$$
The chamber arrangement $\mathcal{A}^{\rm ch}$ consists of $12$ points on the circle $\PP^1_\R$.
In addition to the six points given by $y_1,\ldots,y_6$, there are six new points
from the determinants in (\ref{eq:newminors}). For instance, for
$\{j_0,\ldots,j_4\} = \{1,2,3,5,6\}$, we obtain
$\,3y_1+2y_2+y_3-y_5-2y_6\, = \, 3x_1-7x_2$,
which reveals the transition point $(70:30)$ in $\mathcal{A}^{\rm ch} \subset \PP^1_\R$.
For $x \in \PP^1_\R \backslash \mathcal{A}^{\rm ch}$,
the log-normal polytope is a product of simplices, namely
 $\Delta_2 \times \Delta_2$ or $\Delta_1 \times \Delta_3$ or $\Delta_4$,
depending on the oriented matroid of (\ref{eq:BY}).
For an explicit transition, try the points
$ x = (69:30), (70:30), (71:30)$.
\end{example}

We now turn to the {\em log-Voronoi cell} of a point $p = (y_1^2,\ldots,y_n^2)$ in the squared linear model $X_{>0}$.
This is the subset of all data points $s$ in $\Pi(y)$ such that $p$ is the MLE for~$s$.
Log-Voronoi cells for discrete models were introduced by Alexandr and Heaton in~\cite{AH}. 
Theorems 8, 9 and 10 in \cite{AH} identify various models for which each
log-Voronoi cell coincides with its ambient log-normal polytope.
This holds for all linear models. Their log-Voronoi cells were studied in detail by Alexandr in \cite{Ale}.
However, in general, the log-Voronoi cells are non-linear convex bodies
that are strictly contained in their log-normal polytopes.
For an illustration see \cite[Figure 2]{AH}. In what follows, we initiate the study of
 log-Voronoi cells for squared linear models.
We shall see that their geometry is more complicated than that for linear models.

We begin by discussing the simplest model, which is a circle inscribed in a triangle.

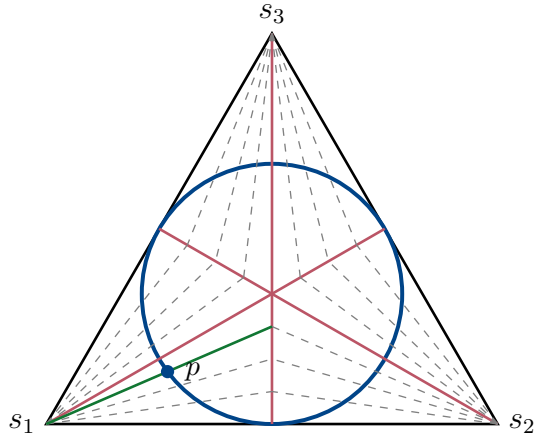
\begin{figure}[h]
  \centering
  \begin{tikzpicture}[scale=1.5]

    \draw [line width=1pt] (0,0) -- (60:4) -- (4,0) -- cycle;

    \coordinate[label=left:$s_1$]  (a) at (0,0);
    \coordinate[label=right:$s_2$] (b) at (4,0);
    \coordinate[label=above:$s_3$] (c) at (2,3.464);
    \coordinate (center) at (2,1.154);

    \draw [line width=1.5pt, color=cb-blue] (center) circle (1.154);
    \draw [line width=1pt, color=cb-red] (a) -- (30:3.464);
    \draw [line width=1pt, color=cb-red] (c) -- (2,0);
    \begin{scope}[shift={(4cm,0cm)}]
      \draw [line width=1pt, color=cb-red] (0,0) -- (150:3.464);
    \end{scope}

    \begin{scope}[xshift=4cm, rotate=120]
        \draw [line width=0.5pt, color=gray, dashed] (b) -- (2,0.865);
        \draw [line width=0.5pt, color=gray, dashed] (b) -- (2,0.577);
        \draw [line width=0.5pt, color=gray, dashed] (b) -- (2,0.2885);
        \begin{scope}[xscale=-1, xshift=-4cm]
          \draw [line width=0.5pt, color=gray, dashed] (c) -- (2,0.865);
          \draw [line width=0.5pt, color=gray, dashed] (c) -- (2,0.577);
          \draw [line width=0.5pt, color=gray, dashed] (c) -- (2,0.2885);
        \end{scope}
    \end{scope}

    \begin{scope}[mirror scope={center={0,0},angle=30}]
      \draw [line width=0.5pt, color=gray, dashed] (a) -- (2,0.865);
      \draw [line width=0.5pt, color=gray, dashed] (a) -- (2,0.577);
      \draw [line width=0.5pt, color=gray, dashed] (a) -- (2,0.2885);
      \mirror;
    \end{scope}

    \begin{scope}[mirror scope={center={0,0},angle=30}]
      \begin{scope}[xscale=-1, xshift=-4cm]
        \draw [line width=0.5pt, color=gray, dashed] (b) -- (2,0.865);
        \draw [line width=0.5pt, color=gray, dashed] (b) -- (2,0.577);
        \draw [line width=0.5pt, color=gray, dashed] (b) -- (2,0.2885);
      \end{scope}
      \mirror;
    \end{scope}

    \draw [line width=1pt, color=cb-green] (a) -- (2,0.865);
    \node [label=right:{\small $p$}, circle, fill=cb-blue, inner sep=0pt, minimum width=5pt] (p) at (1.07448, 0.46471) {};
  \end{tikzpicture}
 \caption{The squared linear model (\textcolor{cb-blue}{blue}) shown inside the triangle $\Delta_2$ of data (black), together with its logarithmic normal bundle (\textcolor{gray}{gray} dashed lines). The triangle is divided into six Weyl chambers (\textcolor{cb-red}{red} lines). The log-Voronoi cell of the point $p$ is the intersection of the fiber of the logarithmic normal bundle with the corresponding Weyl chamber (\textcolor{cb-green}{green} line).}
  \label{fig:logNormalBundleCircle}
\end{figure}

\begin{example}[$n=3,d=2$] \label{ex:circle} Let $\mathcal{A}$ be the arrangement of three
ponts in $\PP^1$ defined by $\ell_1 = x_1, \,\ell_2 = x_2 ,\, \ell_3 = x_1+x_2$.
The model is the conic $X = V(p_1^2+p_2^2+p_3^2-2p_1p_2 -2p_1p_3 -2p_2p_3)$ in $\PP^2$.
In the probability triangle $\overline{\Delta}_2$, this is an inscribed circle, so
it has three connected components in $\Delta_2$. We see this in Figure \ref{fig:logNormalBundleCircle}, 
where the model is drawn in~blue. 

Fix
$p = \bigl(x_1^2: x_2^2: (x_1 + x_2)^2 \bigr) \in X_{>0}$.
The fiber of the log-normal bundle over $p$ is the line 
$$ us_1 + vs_2 + ws_3 \,\,:=\,\,{\rm det} \begin{pmatrix}
\,\,s_1 & \,\,s_2 & s_3 \\ \,\,x_1^2 & \,\,x_2^2 & (x_1\!+\!x_2)^2 \\ -x_1 & \!\!- x_2 & \!x_1\!+\!x_2 \end{pmatrix} .
$$
This is a linear form in $(s_1,s_2,s_3)$, whose coefficients depend cubically on the model point
$$ u \, = \, x_2 (x_1 + 2 x_2) (x_1 + x_2) ,\,\,
v \,=\, - x_1 (x_1 + x_2) (2 x_1 + x_2),\,\,
w \,=\,  - x_1 x_2 (x_1 - x_2) . $$

The triangle $\Delta_2$ is divided into six Weyl chambers, depending on the ordering of $s_1,s_2,s_3$.
These are the six red triangles in Figure \ref{fig:logNormalBundleCircle}.
The log-Voronoi cell at $p $ is the green line segment through $p$.
  It is the intersection of the log-normal line 
 with the Weyl chamber. The log-Voronoi segments interpolate
 between a red boundary and a half-edge of the triangle.
 
This example also shows that each
log-Voronoi cell is strictly contained in its log-normal polytope.
The latter is the intersection of the triangle with the
line spanned by the dashed segment. Thus,
squared linear models are more complicated
than linear models and toric models, for which
these $(n-d)$-dimensional convex bodies coincide
\cite[Theorems 9 and 10]{AH}.
 \end{example}

The topological boundary $\partial S$ of a log-Voronoi cell $S$ consists of data points with at least one additional MLE with another sign pattern. The boundary $\partial S$ is defined by a piecewise analytic function. 
We next identify a setting in which 
the boundary has linear pieces. 
Given $i,j \in \{1,\ldots,n\}$ and $\sigma \in \{-1, +1\}^n$, let $\tau_{ij\sigma}:\PP^{n-1} \to \PP^{n-1}$
denote the map exchanging coordinates $i$ and $j$ followed by coordinatewise multiplication by $\sigma$.   

\begin{proposition} \label{prop:linear_log_Voronoi}
    Fix a squared linear model $X$ and $y^2 \in X$. Let $S$ denote the log-Voronoi cell of $y$.
    If $y$ and $\tau_{ij\sigma}(y)$ have different sign vectors, $\tau_{ij\sigma}(y)^2 \in X$,
    and $V(s_i - s_j) \cap S \neq \emptyset$,   
    then $V(s_i - s_j) \cap S$ is a connected, linear piece of the boundary $\partial S$ of the log-Voronoi cell.
\end{proposition}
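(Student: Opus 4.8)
The plan is to realize the log‑Voronoi cell $S$ as an intersection of closed half‑spaces and then to single out the half‑space whose bounding hyperplane is $V(s_i-s_j)$. Write $p=y^2\in X_{>0}$ and $p'=\tau_{ij\sigma}(y)^2$. Since $\tau_{ij\sigma}(y)^2\in X$ and all coordinates of $\tau_{ij\sigma}(y)$ are nonzero, $p'\in X_{>0}$; and since $y$ and $\tau_{ij\sigma}(y)$ have different sign vectors, $p'\neq p$, equivalently $p_i\neq p_j$, because $p'$ is obtained from $p$ by transposing the coordinates $i$ and $j$. By definition $s\in S$ exactly when $p$ maximizes the likelihood $L_s(q)=\prod_k q_k^{s_k}$ over the model. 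As $L_s$ is continuous on the compact set $\overline{X}_{>0}$ its maximum is attained, so $s\in S$ iff $L_s(p)\ge L_s(q)$ for all $q\in X_{>0}$; taking logarithms gives
\[
 S\;=\;\bigcap_{q\in X_{>0}}\Bigl\{\,s\in\overline{\Delta}_{n-1}\ :\ \textstyle\sum_{k=1}^n s_k\,\log(p_k/q_k)\ \ge\ 0\,\Bigr\}.
\]
In particular $S$ is a closed convex subset of $\overline{\Delta}_{n-1}$, so $V(s_i-s_j)\cap S$ is convex, hence connected.

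The key step is to evaluate the constraint coming from the competitor $q=p'$. Because $p'_k=p_k$ for $k\notin\{i,j\}$ while $p'_i=p_j$ and $p'_j=p_i$, one finds $\sum_k s_k\log(p_k/p'_k)=(s_i-s_j)\log(p_i/p_j)$. Hence $S$ is contained in the closed half‑space $H=\{\,s:(s_i-s_j)\log(p_i/p_j)\ge 0\,\}$, and since $p_i\neq p_j$ the bounding hyperplane of $H$ is precisely $V(s_i-s_j)$. Therefore $V(s_i-s_j)$ is a supporting hyperplane of the convex body $S$, and $V(s_i-s_j)\cap S=\partial H\cap S$ is the face of $S$ it exposes. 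By hypothesis this face is nonempty; it lies in the boundary $\partial S$; it is connected by convexity; and it is a linear (flat) piece, being contained in the affine hyperplane $V(s_i-s_j)$. This proves the proposition.

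In this argument the hypothesis $\tau_{ij\sigma}(y)^2\in X$ is exactly what makes $p'$ an admissible point of $X_{>0}$ to test against, while the hypothesis that $y$ and $\tau_{ij\sigma}(y)$ have different sign vectors guarantees $p\neq p'$ (i.e.\ $p_i\neq p_j$), so that $H$ is a genuine half‑space rather than all of $\overline{\Delta}_{n-1}$. The only delicate point — and the main obstacle — is the identification of $S$ with the displayed intersection of half‑spaces: one must use that the maximum of $L_s$ over the model is attained and that ``$p$ is the MLE'' is the system $\sum_k s_k\log(p_k/q_k)\ge 0$ for all $q\in X_{>0}$ (ties included), which is where $\partial S$ acquires this boundary piece. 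This is where the convention that log‑Voronoi cells are closed, and the continuity of $L_s$ on $\overline{X}_{>0}$, enter; once this is granted, the geometric content is just the one‑line identity $\sum_k s_k\log(p_k/p'_k)=(s_i-s_j)\log(p_i/p_j)$.
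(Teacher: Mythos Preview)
Your argument is correct and matches the paper's: both rest on the identity $\sum_k s_k\log(p_k/p'_k)=(s_i-s_j)\log(p_i/p_j)$, so that on $V(s_i-s_j)$ the point $p'=\tau_{ij\sigma}(y)^2$ ties with $p$ and is therefore a second MLE with a different sign pattern; the paper records this as $S\cap V(s_i-s_j)\subseteq S\cap S'\subseteq\partial S$ (with $S'$ the cell of the other sign pattern), while you phrase the same containment via a supporting half-space of the convex body $S$. One small caveat: your inference ``different sign vectors $\Rightarrow p'\neq p$ (equivalently $p_i\neq p_j$)'' can fail when the parametrization $\PP^{d-1}\to X$ is non-injective at $y$ (cf.\ Theorem~\ref{thm:singularities}), though this degenerate case does not affect the substance of the argument.
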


\begin{proof}
    Let $S' \subset \Pi(y)$ denote the set of data points $s$ in the log-normal polytope $\Pi(y)$ such that $s$ has an MLE with sign vector equal to the sign vector of $\tau_{ij\sigma}(y)$.
    If $s \in S \cap V(s_i - s_j)$, 
    then the likelihood function for $s$ has the same value at $y$ and at $\tau_{ij\sigma}(y)$. 
    Therefore $s \in S \cap S'$. 
    Since $S$ is convex, $S \cap V(s_i - s_j) \subseteq S \cap S' \subseteq \partial S$ is a connected, linear piece of the boundary.
\end{proof}

If the proposition holds for sufficiently many triples $i,j,\sigma$, then $\partial S$ is piecewise linear.

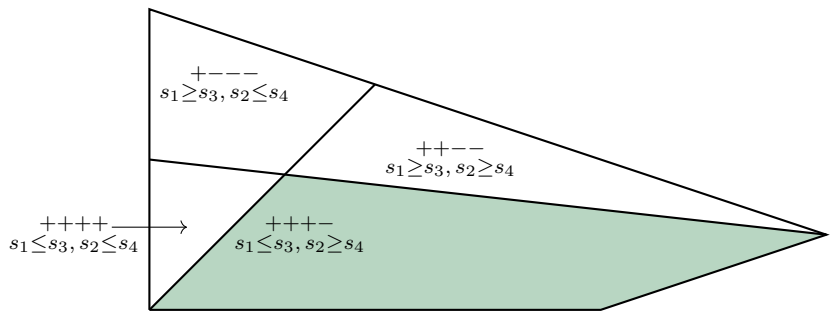
\begin{figure}[t]
    \centering
    \begin{tikzpicture}
        \fill[cb-green!30] (0,0) -- (6,0) -- (9, 1) -- (9/5, 9/5) -- (0,0);
        \draw[thick] (0,0) -- (0,4) -- (9,1) -- (6,0) -- (0,0);
        \draw[thick] (0,0) -- (3,3);
        \draw[thick] (9,1) -- (0,2);

        \node at (2,1) {$\substack{+++-\\s_1 \leq s_3,\, s_2 \geq s_4}$};
        \node at (1,3) {$\substack{+---\\s_1 \geq s_3,\, s_2 \leq s_4}$};
        \node at (4,2) {$\substack{++--\\s_1 \geq s_3,\, s_2 \geq s_4}$};
        \node (a) at (-1,1) {$\substack{++++\\s_1 \leq s_3,\, s_2 \leq s_4}$};
        \draw[->] (-0.5,1.1) -- (0.5, 1.1);
    \end{tikzpicture}
    \caption{The log-normal polygon in Example~\ref{ex:nongeneric-uniform2}. The log-Voronoi cell is marked in \textcolor{cb-green}{green}.}
    \label{fig:linear-log-normal-tiling}
\end{figure}

\begin{example}[$d=2,n=4$] \label{ex:nongeneric-uniform2}  The model in Example~\ref{ex:nongeneric-uniform} has
$ \mathcal{A} = \{x_1,x_1+x_2,x_1+2x_2,x_2\}$. Fix $y = (3, 2, 1, -1)$.
The log-normal polygon $\Pi(y)$ is the intersection of the tetrahedron $\Delta_{3}$ 
with the plane $V(s_1 - s_2 - 3s_3 - 2s_4)$.
This is a quadrilateral, divided into four cells based on the sign vector of the MLE; see Figure~\ref{fig:linear-log-normal-tiling}.
 Their boundaries are $s_1 = s_3$ and $s_2 = s_4$, because the vectors $ \tau_{13,+++-}(y) = (1,2,3,1)$ and $\tau_{24,+---}(y) = (3,1,-1,-2)$ are in the kernel of $B$. 

Consider the slightly modified arrangement $\{x_1,x_1+x_2,x_1-2x_2,x_2\}$.
The log-normal polytope of $y = (3, 2, 5, -1)$ is again a quadrilateral with four nonempty cells. 
The log-Voronoi cell meets all other cells in a codimension 1 boundary. This boundary is nonlinear.
\end{example}

We found that
the log-Voronoi cells of squared linear models exhibit a wide range of behaviors. Experiments suggest that, for a generic model and point, 
the boundary of the log-Voronoi cell is nonlinear. As in Section \ref{sec5}, being generic is stronger than having a uniform matroid.
Example~\ref{ex:nongeneric-uniform2} underscores this.
As in \cite[page 9]{AH}, we think that these boundaries are generally not algebraic.
Experiments indicate that it is also possible to have linear and nonlinear boundary pieces at the same time. Finally, it is possible that all boundaries are linear, so the log-Voronoi cell is a polytope
(see Figure \ref{fig:linear-log-normal-tiling}).
In conclusion, the detailed study of log-Voronoi cells for squared linear models is a promising
direction for future research.

\section{Determinantal Point Processes}
\label{sec7}

In this final section we show how our squared linear models arise naturally in applications.
We are interested in discrete statistical models whose states are the
$k$-element subsets $\sigma$ of a  finite set $[n] = \{1, 2, . . . , n\}$,
and where the occurrence of elements in the subsets are negatively correlated.
In such scenarios, the model of choice is a  determinantal point process (DPP).
The DPP model is ubiquitous in
probability theory, statistical physics, algebraic combinatorics and machine learning \cite{BDF, KT, lyons, macchi}.

A general DPP may be written as the mixture of {\em projection DPPs}. 
The model parameter for a projection DPP is a $k \times n$ matrix $\Theta$,
and the probability of observing a particular $k$-subset $\sigma$ is proportional to ${\rm det}(\Theta_\sigma)^2$,
where $\Theta_\sigma$ is the $k \times k$-submatrix of $\Theta$ with column indices $\sigma$.
In algebraic statistics, one views the projection DPP as a projective variety in
$\PP^{\binom{n}{k}-1}$. This variety is the
{\em squared Grassmannian} ${\rm sGr}(k,n)$, whose coordinates
are the squares of the Pl\"ucker coordinates on the Grassmannian ${\rm Gr}(k,n)$.
For details and references see  \cite{DFRS, Fri}.

\begin{example}
    The braid arrangement in Example~\ref{ex:braid} may be realized as a projection DPP whose state space is $2$-subsets of $[n]$. The corresponding parameter matrix $\Theta$ is 
    \begin{align*}
        \begin{pmatrix}
            1 & 1 & \cdots & 1\\
            x_1 & x_2 & \cdots & x_n
        \end{pmatrix}.
    \end{align*}
\end{example}

We now consider the submodel given by 
a linear subspace $\mathcal{L}$ of the Pl\"ucker space $\PP^{\binom{n}{k}}-1$ which 
is contained in the Grassmannian ${\rm Gr}(k,n)$.
For instance, we can take   $\mathcal{L}$ to be the set of all
$k$-planes in $\R^n$
that contain a fixed $(k-1)$-plane.
Or, dually, we can consider all $k$-planes in $\R^n$ that lie in a fixed
$(k+1)$-plane. In terms of the parametrization above,
we take $\Theta$ to be a matrix whose
last row consists of unknowns $\theta_1,\theta_2,\ldots,\theta_n$
and whose other entries are fixed real numbers $a_{i,j}$.
We refer to this model as a  {\em linear projection DPP},
and we denote it by $\mathcal{L}^2 \subset {\rm sGr}(k,n)$.
We say that $\mathcal{L}^2$ is
 {\em generic} if the numbers $a_{i,j}$ are chosen~generically.

Using row operations, we can eliminate $k-1$ of the model parameters, and we can write
$$ \Theta \,\, = \,\,
\begin{pmatrix}
a_{1,1} & a_{1,2} & \cdots & a_{1,n-k+1}  & 1 & 0 & \cdots & 0 \, \,\\
a_{2,1} & a_{2,2} & \cdots & a_{2,n-k+1}  & 0 & 1 & \cdots & 0 \,\, \\
\vdots & \vdots & \ddots & \vdots & 0 & 0 & \ddots & 0\,\, \\
a_{k-1,1} & a_{k-1,2} & \cdots & a_{k-1,n-k+1}  & 0 & 0 & \cdots & 1\,\, \\
\theta_1 & \theta_2 & \cdots & \theta_{n-k+1} & 0 & 0 & \cdots  & 0\,\, \\
\end{pmatrix}.
$$
The probabilities in the model $\mathcal{L}^2$ are the squares
of all maximal minors of $\Theta$. Equivalently, the
probabilities are the squares of maximal minors
containing the last column in the matrix
\begin{equation}
\label{eq:LPDPP} \begin{pmatrix}
\,\, 1 & 0 & \cdots & 0 & - a_{1,1} & -a_{2,1} & \cdots & -a_{k-1,1} & \,\theta_1 \, \\
\,\, 0 & 1 & \cdots & 0 &- a_{1,2} & -a_{2,2} & \cdots & -a_{k-1,2} &  \,\theta_2 \, \\
\,\, \vdots & \vdots & \ddots & \vdots & \vdots & \vdots & \ddots        & \vdots  &  \,\vdots \,\\
\,\,  0 & 0 & \cdots & 1 & - a_{1,n-k+1} & - a_{2,n-k+1} & \cdots & -a_{k-1,n-k+1} &\,\theta_{n-k+1} \, \\
\end{pmatrix}.
\end{equation}
These $\binom{n}{k}$ minors are precisely the hyperplanes spanned by any $n-k$ of the first $n$ columns.

\begin{proposition} \label{prop:discriminantal}
A squared linear model is a linear projection DPP
if and only its hyperplane arrangement $\mathcal{A}$ is a {\em discriminantal arrangement}.
This means that the hyperplanes in $\mathcal{A}$ are those spanned by subsets of the 
$n$ points in $\PP^{n-k}$ seen in the first $n$ columns in (\ref{eq:LPDPP}).
\end{proposition}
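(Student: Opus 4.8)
The plan is to prove both implications by unwinding the explicit matrix (\ref{eq:LPDPP}), using the fact established in the paragraph immediately preceding the proposition: the maximal minors of (\ref{eq:LPDPP}) that involve the last column are exactly the $\binom{n}{k}$ linear forms, in the parameters $\theta = (\theta_1,\ldots,\theta_{n-k+1})$, that cut out the hyperplanes of $\PP^{n-k}$ spanned by the $(n-k)$-subsets of the $n$ points given by the first $n$ columns. Writing $m = n-k$, those $n$ points $w_1,\ldots,w_n$ live in $\PP^{m}$, and the relevant minors are the $\binom{n}{m}=\binom{n}{k}$ determinants $\ell_S(\theta) = \det[\,w_i : i\in S \mid \theta\,]$ for $m$-subsets $S\subseteq[n]$.

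\emph{Forward direction.} If $X$ is a linear projection DPP $\mathcal{L}^2\subseteq{\rm sGr}(k,n)$ with parameter matrix $\Theta$ as displayed before (\ref{eq:LPDPP}), I would first pass from the squared maximal minors $\det(\Theta_\sigma)^2$ to the squared minors of (\ref{eq:LPDPP}) through the last column, reindexing the states from $k$-subsets $\sigma$ to their complements $S = [n]\setminus\sigma$. This is the Grassmann duality ${\rm Gr}(k,n)\cong{\rm Gr}(n-k,n)$ realized by the passage from $\Theta$ to $[\,I_{m+1}\mid\ast\,]$, i.e.\ exactly the "Equivalently\ldots" sentence before (\ref{eq:LPDPP}). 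By the quoted fact, the $S$-th coordinate of the model is then proportional to $\ell_S(\theta)^2$, so $X$ is the squared linear model on the linear forms $\ell_S$, whose arrangement is by definition the discriminantal arrangement of the configuration $w_1,\ldots,w_n\in\PP^{m}$ read off from the first $n$ columns of (\ref{eq:LPDPP}).

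\emph{Backward direction.} Conversely, assume $\mathcal{A}$ is discriminantal: there is a configuration $w_1,\ldots,w_n\in\PP^{m}$ such that the $\binom{n}{m}$ linear forms defining $X$ are, up to scaling, the $\ell_S(x) = \det[\,w_i : i\in S\mid x\,]$ with $|S|=m$; set $k = n-m$. After a linear change of coordinates I may take $w_1,\ldots,w_{m+1}$ to be the standard basis of $\CC^{m+1}$ (reordering if necessary; if no $m+1$ of the $w_i$ are independent, the arrangement is non-essential and $m$ drops). Then $[\,w_1\mid\cdots\mid w_n\,]$ has block form $[\,I_{m+1}\mid C\,]$ with $C$ of size $(m+1)\times(k-1)$, which is precisely the array of first $n$ columns in (\ref{eq:LPDPP}) with $C = -\tilde A$ and $n-k+1 = m+1$. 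Appending $(x_1,\ldots,x_{m+1})^{T}$ as the $(n+1)$-st column produces the normal form (\ref{eq:LPDPP}) with $\theta = x$; by the quoted fact, its minors through the last column recover the $\ell_S(x)$, so the linear projection DPP attached to this matrix equals $X$ after the state relabeling $\sigma = [n]\setminus S$.

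The substantive input -- that these minors are the hyperplanes of a discriminantal arrangement -- is already in place before the statement, so the remaining work is bookkeeping: tracking the signs in the $k$-subset/$(n-k)$-subset duality, and checking that any discriminantal arrangement can be brought into the shape (\ref{eq:LPDPP}), which requires $m+1$ of its defining points to be in general linear position. The one genuine subtlety, which I would handle using the scaling freedom in the representatives $w_i$ together with the column scalings of $\Theta$, is matching the individual scalars of the linear forms $\ell_S$; this is the only place where "squared linear model" and "linear projection DPP" must be compared up to the torus rescaling of the coordinates of $\PP^{n-1}$. I expect that scaling match-up to be the most delicate part of writing the proof cleanly, though it is not deep.
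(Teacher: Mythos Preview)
Your proposal is correct and follows the same route as the paper. In fact, the paper does not supply a separate proof of Proposition~\ref{prop:discriminantal}: the proposition is presented as a summary of the construction and the sentence immediately preceding it (``These $\binom{n}{k}$ minors are precisely the hyperplanes spanned by any $n-k$ of the first $n$ columns''), which is exactly the input you single out. Your write-up makes both implications explicit via Grassmann duality and the normal form (\ref{eq:LPDPP}), and you flag the scaling issue for the $\ell_S$ versus the columns of $\Theta$; the paper leaves both of these implicit, so your version is more complete rather than different.
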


Let us consider  generic models $\mathcal{L}^2$
where $\ell = n-k$ is fixed.
These are given by the $\binom{n}{\ell}$ hyperplanes which are
spanned by all $\ell$-subsets of $n$ generic points in $\PP^\ell$.
It is known that the number of regions of this discriminantal
arrangement is a polynomial in $n$ of degree $\ell^2$ \cite{ABFKST}.

\begin{corollary}
The ML degree of the generic linear projection DPP is
a polynomial in $n$ of degree $\ell^2$. For instance, if $\ell = 2$ then
the ML degree of the generic model $\mathcal{L}^2$ equals
\begin{equation}
\label{eq:quarticpoly}
 \frac{1}{8} (n-1)(n^3 - 5 n^2 + 14 n - 8). 
 \end{equation}
\end{corollary}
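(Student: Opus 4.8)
The plan is to reduce the statement to the region count of a generic discriminantal arrangement, for which the degree-$\ell^{2}$ polynomiality is already recalled in the text from \cite{ABFKST}. By Proposition~\ref{prop:discriminantal}, a generic linear projection DPP $\mathcal{L}^{2}$ with $\ell = n-k$ fixed is a squared linear model in $d = \ell+1$ parameters $\theta_{1},\dots,\theta_{\ell+1}$, and its underlying hyperplane arrangement $\mathcal{A} \subset \PP^{\ell}_{\R}$ is the generic discriminantal arrangement spanned by the $\ell$-subsets of the $n$ points in the first $n$ columns of (\ref{eq:LPDPP}). By Theorem~\ref{thm:two} the ML degree of $\mathcal{L}^{2}$ equals the number of regions of $\PP^{\ell}_{\R}\backslash\mathcal{A}$, equivalently $|\chi_{\mathcal{A}}(-1)|/2$ by Corollary~\ref{cor:MLdegree}. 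Since the region count of the generic discriminantal arrangement is, for fixed $\ell$, a polynomial in $n$ of degree $\ell^{2}$ \cite{ABFKST}, so is the ML degree; this settles the general assertion.

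For the explicit quartic when $\ell=2$, I would compute the region count of $\mathcal{A}$ by hand. Here $\mathcal{A}$ is the arrangement of the $\binom{n}{2}$ lines $\overline{p_{i}p_{j}}$ through pairs of $n$ generic points $p_{1},\dots,p_{n}\in\PP^{2}_{\R}$. For generic $p_{i}$ the only concurrences are the forced ones: each base point $p_{i}$ lies on the $n-1$ lines $\overline{p_{i}p_{j}}$, giving $t_{n-1}=n$; and for each $4$-subset the complete quadrilateral contributes its $3$ diagonal points, giving $t_{2}=3\binom{n}{4}$, with all remaining $t_{r}=0$. (Three of the lines with pairwise disjoint index pairs are concurrent only on a proper subvariety of configurations, and no base point is a diagonal point, so nothing else collapses.) The Euler relation $f_{0}-f_{1}+f_{2}=1$ for a line arrangement in $\PP^{2}_{\R}$, together with $f_{0}=\sum_{r}t_{r}$ and $f_{1}=\sum_{r}r\,t_{r}$, yields $f_{2}=1+\sum_{r\geq 2}(r-1)\,t_{r}=1+(n-2)n+3\binom{n}{4}$, which expands to $\tfrac{1}{8}(n-1)(n^{3}-5n^{2}+14n-8)$, matching the claim.

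I expect the main obstacle to be the genericity bookkeeping for the $\ell=2$ count — verifying that a generic choice of the $a_{i,j}$ in (\ref{eq:LPDPP}) makes the $n$ points projectively generic with no unexpected line concurrences, so that $t_{n-1}=n$ and $t_{2}=3\binom{n}{4}$ exactly — and, at the level of the general statement, keeping careful track of whether the region count cited from \cite{ABFKST} refers to the central arrangement in $\R^{\ell+1}$ or to its projectivization in $\PP^{\ell}_{\R}$; this only changes a global factor of $2$ and hence does not affect the degree, but it must be reconciled with the $|\chi_{\mathcal{A}}(-1)|/2$ in Corollary~\ref{cor:MLdegree} before the exact quartic can be pinned down. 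By contrast, once Proposition~\ref{prop:discriminantal} and Corollary~\ref{cor:MLdegree} are granted, the polynomiality-and-degree half of the corollary is essentially immediate.
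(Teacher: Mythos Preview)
Your argument is correct. The first half matches the paper's proof exactly: both reduce to the region count via Proposition~\ref{prop:discriminantal} and Corollary~\ref{cor:MLdegree}, then cite \cite{ABFKST} for the degree-$\ell^{2}$ polynomiality.

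For the $\ell=2$ formula, however, you take a genuinely different route. The paper imports the characteristic polynomial of the non-central discriminantal arrangement from \cite[p.~793]{KNT} and then recovers the projective region count as $\tfrac{1}{2}\bigl(\chi_{\mathcal A}(-1)+\chi_{\mathcal A}(1)\bigr)$. You instead count regions directly via the Euler relation $f_0-f_1+f_2=1$ for a line arrangement in $\PP^2_{\R}$, using $t_{n-1}=n$ and $t_2=3\binom{n}{4}$. Your computation $f_2=1+n(n-2)+3\binom{n}{4}$ expands correctly to $\tfrac{1}{8}(n-1)(n^3-5n^2+14n-8)$ and checks against the paper's $n=6$ example (value $70$). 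The advantage of your approach is self-containment: no external reference beyond Zaslavsky-type region counting is needed. The advantage of the paper's approach is that the genericity bookkeeping you flag --- no three lines with disjoint index pairs concurrent, diagonal points from distinct quadrilaterals distinct, no diagonal point coinciding with a base point --- is subsumed in the citation to \cite{KNT}, whereas you must verify these conditions hold on a Zariski-open set of the $a_{i,j}$. That verification is routine (each coincidence is a proper closed condition), but it should be stated rather than only acknowledged as an obstacle.
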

\begin{proof}
    The first sentence is a direct consequence of \cite[Theorem 3.3]{ABFKST}. 
    In \cite[p. 793]{KNT} the authors compute the characteristic polynomial $\chi_{\cA}(t)$ for the \emph{non-central} discriminantal arrangement. Then $\chi_{\cA}(1)$ is the number of bounded regions in $\R^n\backslash\cA$, while $\chi_{\cA}(-1)$ is the number of unbounded regions in $\R^n\backslash\cA$. To obtain the total number of regions in $\PP^{n-1}_{\R}\backslash\cA$, i.e., the ML degree, one then computes $\frac 1 2 (\chi_\mathcal A(-1) + \chi_\mathcal A(1))$. This is the polynomial \eqref{eq:quarticpoly}.
\end{proof}

\begin{example}[$n=6,k=4$]
Let $\Theta$ be a $4 \times 6$ matrix
whose first three rows are random real numbers
and whose last row consists of parameters
$\theta_1,\theta_2,\ldots,\theta_6$.
The probability of any quadruple in $[6]$ is the squared
determinant of the corresponding maximal minor of $\Theta$,
divided by the sum of all $15$ squared minors.
The variety $\mathcal{L}^2$ is a $3$-dimensional Veronese surface inside 
the $8$-dimensional squared Grassmannian ${\rm sGr}(4,6) \subseteq \PP^{14}$. 
The ML degree of this DPP model equals $70$. This
 the value of (\ref{eq:quarticpoly}) for $n= 6$.
 It counts the regions in the
 discriminantal arrangement for six points in $\PP^2$.
 These points are derived by the row operations on $\Theta$ that yield (\ref{eq:LPDPP}).
 All complex critical points are real, and
  there is one critical point in each region.
   \end{example} 
\section*{Acknowledgments}
We thank Benjamin Hollering and Yue Ren for helping
us with computations for Proposition \ref{prop:ideal_computations}
and Section \ref{sec5} respectively.

\bigskip


\end{document}